\numberwithin{equation}{section}
\theoremstyle{plain}
\newtheorem{theorem}{Theorem}[section]
\newtheorem{proposition}[theorem]{Proposition}         
\newtheorem{corollary}[theorem]{Corollary} 
\newtheorem{lemma}[theorem]{Lemma} 
\theoremstyle{definition}  
\newtheorem{remark}[theorem]{Remark} 
\newcommand{\C}{\mathbb C}   
\newcommand{\R}{\mathbb R}
\newcommand{\Z}{\mathbb Z}
\renewcommand{\P}{\mathbb P}
\newcommand{\X}{\mathbb X}  
\newcommand{\al}{\alpha}
\newcommand{\ga}{\gamma}
\newcommand{\de}{\delta}
\newcommand{\la}{\lambda}
\newcommand{\si}{\sigma} 
\newcommand{\La}{\Lambda}
\newcommand{\Om}{\Omega}
\newcommand{\De}{\Delta}
\renewcommand{\th}{\theta}
\newcommand{\om}{\omega}
\newcommand{\ze}{\zeta}
\DeclareMathOperator{\tr}{tr}
\DeclareMathOperator{\diag}{diag}
\DeclareMathOperator{\ar}{arg}
\DeclareMathOperator{\maxx}{max}
\renewcommand{\Re}{\mathrm{Re\,}}
\newcommand{\no}{\noindent}
\newcommand{\pr}{\prime} 
\newcommand{\prr}{{\prime\prime}} 
\newcommand{\st}{\ \vert\ }   
\renewcommand{\ll}{\lq\lq}
\newcommand{\rr}{\rq\rq\ }
\newcommand{\rrr}{\rq\rq}  
\renewcommand{\b}{\partial}
\newcommand{\zbar}{  {\bar z}  }
\newcommand{\zzb}{ {z\bar z}  }
\renewcommand{\smallint}{  {\textstyle \text{$\int$} }}
\renewcommand{\i}{ {\scriptscriptstyle\sqrt{-1}}\, }
\newcommand{\bp}{\begin{pmatrix}} 
\newcommand{\ep}{\end{pmatrix}} 
\newcommand{\four}{\vphantom{a^a} 4} 
\newcommand{\five}{\vphantom{a^a} 5} 
\newcommand{\six}{\vphantom{a^a} 6}
\newcommand{\SL}{\textrm{SL}}
\newcommand{\SO}{\textrm{SO}}
\newcommand{\SU}{\textrm{SU}}
\renewcommand{\sl}{\frak s\frak l}
\newcommand{\sss}{\scriptsize}
\begin{document}     

\title[Isomonodromy aspects I: Stokes data]{Isomonodromy aspects of the tt*
equations of Cecotti and Vafa 
\\
I. Stokes data}  

\author{Martin A. Guest, Alexander R. Its, and Chang-Shou Lin}      

\date{}   

\begin{abstract} We describe all smooth solutions 
of the two-function tt*-Toda equations (a version of the tt* 
equations,  or equations for harmonic maps into $\SL_n\R/\SO_n$) in terms
of (i) asymptotic data, (ii) holomorphic data, and (iii)
monodromy data.  This allows us to find all solutions with integral Stokes data. 
These include solutions associated to nonlinear sigma models (quantum cohomology) or Landau-Ginzburg models (unfoldings of singularities), as conjectured by Cecotti and Vafa.
In particular we establish the existence of a new family of pure and polarized TERP structures in the sense of \cite{He03}, or noncommutative variations of Hodge structures in the sense of \cite{KaKoPa07}.
\end{abstract}

\subjclass[2000]{Primary 81T40;
Secondary 53D45, 35J60, 34M40}

\maketitle 

\section{Introduction}\label{intro}

\subsection{The p.d.e.\ and the monodromy data}\label{first}

The two-dimensional Toda lattice is an important integrable system with many aspects. 
It is an example of a nonabelian Chern-Simons theory in classical field theory (see \cite{YiYa01}),
and in differential geometry it can be interpreted as the equation
for primitive harmonic maps taking values in a compact flag manifold  (\cite{BoPeWo95},\cite{BuPe94}).  Such maps are closely related to  harmonic maps into symmetric spaces.  These in turn have many geometrical interpretations, e.g.\ 
surfaces in $\R^3$ of constant mean curvature (see \cite{Do08}), or
special Lagrangian cones in $\C^3$ 
(\cite{Jo08},\cite{Mc03}). 

We shall be concerned with the \ll two-dimensional periodic Toda lattice with opposite sign\rrr.
This is the system
\begin{equation}\label{ost}
 2(w_i)_{\zzb}=-e^{2(w_{i+1}-w_{i})} + e^{2(w_{i}-w_{i-1})}, \ \  
 w_i:U\to\R
\end{equation}
where $U$ is some open subset of $\C=\R^2$. We assume that $w_i=w_{i+n+1}$ for all $i\in\Z$ and  $w_0+\cdots+w_n=0$. 

The first manifestation of the integrability of this system is its zero curvature formulation:
\[
\text{$d\al+\al\wedge\al=0$ \ \ for all $\la\in\C^\ast=\C-\{0\}$} 
\]
where
\[
 \al(\lambda)=\al^\pr(\lambda) dz + \al^\prr(\lambda) d\zbar
 = (w_z+\tfrac1\la W^t)dz + (-w_{\zbar}+\la W)d\zbar,
 \]
 and
 \[
 w=\diag(w_0,\dots,w_n),\ \ 
 W=
 \left(
\begin{array}{c|c|c|c}
\vphantom{(w_0)_{(w_0)}^{(w_0)}}
 & \!e^{w_1\!-\!w_0}\! & &  
 \\
\hline
  &  & \  \ddots\   & \\
\hline
\vphantom{\ddots}
  & &  &  e^{w_n\!-\!w_{n\!-\!1}}\!\!\!
\\
\hline
\vphantom{(w_0)_{(w_0)}^{(w_0)}}
\!\! e^{w_0\!-\!w_n} \!\!  & &  &  \!
\end{array}
\right).
\]
In other words, (\ref{ost}) is the
compatibility condition $2w_{z\zbar}=[W^t,W]$ for the linear system
\begin{align}\label{*}
\begin{cases}
\Psi_z&=(w_z+\tfrac1\la W)\Psi\\
\Psi_\zbar&=(-w_{\zbar}+\la W^t)\Psi.
\end{cases}
\end{align}
The fact that $\al^\pr,\al^\prr$ involve only $1/\la,\la$ leads to holomorphic data for solutions of (\ref{ost}).  We recall from section 4 of \cite{GuLiXX} that
the holomorphic data is a matrix of the form
\[
\eta=
\bp
 & & & p_0\\
 p_1 & & & \\
  & \ddots & & \\
   & & p_n &
   \ep
\]
where each $p_i=p_i(z)$ is a holomorphic function. 

For radial solutions of (\ref{ost}), i.e.\ when
$w_i=w_i(z,\zbar)$ depends only on the real variable $x=\vert z\vert$,  (\ref{*}) reduces to
\begin{equation}\label{xequation}
\Psi_x= \tfrac1x\left( z \Psi_z + \zbar  \Psi_\zbar\right)
= \left( \tfrac 1{\la}\tfrac zx W + \la \tfrac{\zbar} x  W^t \right)\Psi.
\end{equation} 
This can be regarded as the equation for an isomonodromic deformation, which is another --- perhaps more famous --- manifestation of integrability, and a well known approach  to studying equations of Painlev\'e type (see \cite{FlNe80},\cite{JMU} and also \cite{FIKN06}).  

Namely, if we impose 
the (Euler-type) homogeneity condition
\begin{equation}\label{ho}
\la\Psi_\la + z\Psi_z-\zbar\Psi_\zbar = 0,
\end{equation}
we obtain
\begin{equation}\label{lambdaequation}
\Psi_\la =  \left( -\tfrac{1}{\la^2} zW - \tfrac{1}{\la} xw_x + \zbar W^t \right) \Psi
\end{equation}
which is a meromorphic o.d.e.\ in $\la$ with poles of order two at 
$0$ and $\infty$.   Writing $\mu=\la x/z$, we obtain
\begin{equation}\label{muequation}
\Psi_\mu =  \left( -\tfrac{1}{\mu^2} xW - \tfrac{1}{\mu} xw_x + xW^t\right) \Psi.
\end{equation}
The  compatibility condition of the linear system
\begin{align}\label{**}
\begin{cases}
\Psi_\mu&=\left( -\tfrac{1}{\mu^2} xW - \tfrac{1}{\mu} xw_x + xW^t\right) \Psi\\
\Psi_x&=\left(  \tfrac1\mu W + \mu W^t\right)\Psi
\end{cases}
\end{align}
is the radial version
$(xw_{x})_x=2x[W^t,W]$) of (\ref{ost}).  

From the connection point of view, the homogeneity assumption (\ref{ho}) extends the flat connection 
$d+\al$ to a flat connection 
$d+\al+\hat\al$, where 
$\hat\al(\mu) =  \left( -\tfrac{1}{\mu^2} xW - \tfrac{1}{\mu} xw_x + xW^t\right)d\mu$.   
It is well known that in this situation the $\mu$-system (\ref{muequation})  is isomonodromic, i.e.\ its monodromy data is independent of $x$.  Conversely, starting from (\ref{muequation}),  one may seek its  isomonodromic deformations (see \cite{JMU}), and this leads back to (\ref{xequation}). 

The monodromy data of (\ref{muequation}) consists of formal monodromy and Stokes matrices at each of the two poles, as well as a connection matrix relating them.   Locally, solutions of the radial version of (\ref{ost}) correspond to such data. However this kind of data is in general difficult to compute explicitly, and global properties of the corresponding solution are difficult to read off.

We shall impose the following \ll anti-symmetry condition\rr
\begin{equation}\label{as}
\begin{cases}
\ \  w_0+w_{l-1}=0, \ w_1+w_{l-2}=0,\ \ \dots\\
\ \  w_l+w_{n}=0, \ w_{l+1}+w_{n-1}=0,\ \ \dots
 \end{cases}
\end{equation}
for some $l\in\{0,\dots,n+1\}$. For $l=0$ (equivalently $l=n+1$), this means that
$w_i+w_{n-i}=0$ for all $i$;  in this case (\ref{ost}) and (\ref{as}) is the system studied by Cecotti and Vafa in \cite{CeVa91},\cite{CeVa92},\cite{CeVa92a}.  In general we call (\ref{ost}) and (\ref{as}) together with the radial assumption the {\em tt*-Toda equations.}

In this article we shall identify all solutions of the tt*-Toda equations which are smooth on the \ll maximal\rr domain $U=\C^\ast$. They constitute a $2$-dimensional family.  We shall give simple and explicit parametrizations of this family in terms of asymptotic data, holomorphic data, and monodromy data.  In section \ref{interp} we explain the important geometrical and physical motivation behind this description.

To end this section, we mention that equations (\ref{*}) and (\ref{as}) possess the following symmetries, which will play an important role in our computation of the monodromy data.  They
depend on the three automorphisms $\tau, \si, c$ of 
\[
\sl_{n+1}\C=\{
\text{complex $(n+1)\times(n+1)$ matrices $X$ with $\tr X=0$}
\}
\]
which are defined by
\[
\tau(X)=d_{n+1}^{-1} X d_{n+1},\ \ \si(X)=-\De\, X^t\,  \De,\ \ c(X)=\De \bar X  \De
\]
where
$d_{n+1}=\diag(1,\om,\dots,\om^n)$,  $\om=e^{{2\pi \i}/{(n+1)}}$,
and
\[
\De=\De_{l,n+1-l}=
\bp
J_l & \\
 & J_{n+1-l}
 \ep,
 \quad
 J_l=
 \bp
  & & 1 \\
  & \iddots \, & \\
1 & &
  \ep  \ \text{($l\times l$ matrix).}
 \]
Namely, it is easily verified that the connection form $\al$ has the following properties.

\no{\em Cyclic symmetry: }  $\tau(\al(\la))=\al(e^{{2\pi \i}/{(n+1)}} \la)$

\no{\em Anti-symmetry: }  $\si(\al(\la))=\al(-\la)$

\no{\em Reality: }  $c(\al^\pr(\la))=\al^\prr(1/\bar\la)$

\no The cyclic property expresses the shapes of the matrices in (\ref{*}), the anti-symmetry property is just condition (\ref{as}), and the reality property corresponds to the fact that every $w_i$ is real.

\subsection{Geometry}\label{interp}

The differential geometric or Lie-theoretic meaning of the three symmetries, and the proof of the following result, can be found in section 2 of our previous article  \cite{GuLiXX}.

\begin{proposition} From any solution
of the system (\ref{ost}) and (\ref{as})
we obtain a harmonic map $U\to \SL_{n+1}\R/\SO_{n+1}$.  
\end{proposition}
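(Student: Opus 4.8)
The plan is to recognise the $\C^\ast$-family of flat connections $d+\al(\la)$ --- whose flatness for all $\la$ is by construction the zero-curvature form of (\ref{ost}) --- as the extended frame of a harmonic map, and to read the target off from the three symmetries; this follows the treatment in \cite{GuLiXX} and rests on the loop-group characterisation of harmonic maps into (pseudo-)Riemannian symmetric spaces (see \cite{BuPe94} and the references therein). Recall that characterisation in the form needed here: if $G/K$ is a symmetric space with involution $\th$ and Cartan decomposition $\g=\k\oplus\m$, then the Maurer--Cartan form of a frame $F:M\to G$ of $f=F\bmod K$ is $F^{-1}dF=\be^\pr+\al_\k+\be^\prr$ with $\be^\pr\in\Om^{1,0}(\mc)$, $\be^\prr\in\Om^{0,1}(\mc)$ and $\al_\k$ a $\k$-valued connection, and $f$ is harmonic if and only if the deformed family $\la^{-1}\be^\pr+\al_\k+\la\be^\prr$ is flat for every $\la\in\C^\ast$. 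Conversely, any $\sl_{n+1}\C$-valued $1$-form of this graded shape which restricts to a $\g$-valued one-form for $\la\in S^1$ and is flat for all $\la$ integrates, on a simply connected domain, to such a frame, whence a harmonic map $f=F\bmod K$ with $F$ the value of the solution at $\la=1$.

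It therefore suffices to check that $\al(\la)$ is exactly of this kind for $\SL_{n+1}\R/\SO_{n+1}$. For $\la\in S^1$ one has $1/\bar\la=\la$, so the reality symmetry reads $c(\al^\pr(\la))=\al^\prr(\la)$; since $c$ is the anti-linear involution $X\mapsto\De\bar X\De$ of $\sl_{n+1}\C$, and since applying $c$ once more gives $\al^\pr(\la)=c(\al^\prr(\la))$, this forces the $1$-form $\al(\la)=\al^\pr(\la)\,dz+\al^\prr(\la)\,d\zbar$ to take values in the real form $\g_c:=\Fix(c)$. A short computation --- diagonalising the real symmetric orthogonal matrix $\De$ and rescaling its $(-1)$-eigenspace by $\sqrt{-1}$ --- identifies $\g_c$ with $\sl_{n+1}\R$ and, simultaneously, $\Fix(\si)\cap\g_c$ with $\so_{n+1}$, the restriction of $\si$ to $\g_c$ becoming a conjugate of the Cartan involution $X\mapsto-X^t$. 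Writing $\al(\la)=\tfrac1\la W^t\,dz+(w_z\,dz-w_\zbar\,d\zbar)+\la W\,d\zbar$, the anti-symmetry symmetry $\si(\al(\la))=\al(-\la)$ together with condition (\ref{as}) gives $\si(W^t)=-W^t$, $\si(W)=-W$ and $\si(w)=w$; hence, for $\la\in S^1$, the $\la^{-1}$-term is a $(1,0)$-form valued in the $(-1)$-eigenspace $\mc$ of $\si$, the $\la^{0}$-term is a $\k_c$-valued connection, and the $\la^{1}$-term is a $(0,1)$-form valued in $\mc$, the reality symmetry making the $\la^{\pm1}$-terms complex conjugate of one another. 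This is precisely the graded shape above.

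Granting this, the argument closes quickly. Integrate the flat connection $d+\al(\la)$ on the universal cover of $U$ from a base point to obtain $\Psi(z,\zbar,\la)\in\SL_{n+1}\C$ with $\Psi^{-1}d\Psi=\al(\la)$ and $\Psi=I$ at the base point; by the previous paragraph $\Psi(1)$ takes values in the subgroup $G_c\cong\SL_{n+1}\R$ and is a frame of the required kind, so $f:=\Psi(1)\bmod\SO_{n+1}$ is harmonic, the flatness of $d+\al(\la)$ for all $\la$ being exactly the harmonic map equation. Finally one checks that $f$ descends from the universal cover to $U$ itself: the $\la$-monodromy of the flat connection around a loop in $U$ is absorbed into $\SO_{n+1}$, so the map into $\SL_{n+1}\R/\SO_{n+1}$ is single-valued on $U$.

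I expect the heart of the matter to be the second paragraph: the Lie-theoretic bookkeeping that turns the three symmetries --- which a priori only encode the cyclic shape of $W$, the antisymmetry condition (\ref{as}), and the reality of the $w_i$ --- into the symmetric-space datum of $\SL_{n+1}\R/\SO_{n+1}$, and in particular the verification that, in the identification $\g_c\cong\sl_{n+1}\R$, the coefficient matrices $W^t$, $w$, $W$ carry exactly the $\mc$- and $\k_c$-content demanded by the loop-group theorem; it is here that (\ref{as}) is used in an essential way. Note that the cyclic symmetry $\tau$ is not needed for this Proposition --- it enters only later, to equip the harmonic map with the additional periodicity underlying the tt*/Cecotti--Vafa picture.
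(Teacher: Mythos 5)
Your approach is the correct one, and it is presumably what section 2 of \cite{GuLiXX} does (the present paper does not reprove the proposition, it only refers to that earlier article): decompose $\al(\la)=\la^{-1}W^t\,dz+(w_z\,dz-w_\zbar\,d\zbar)+\la W\,d\zbar$, use the reality involution $c$ to cut down to the real form $\Fix(c)\cong\sl_{n+1}\R$, use $\si$ (which restricts to a conjugate of $X\mapsto -X^t$ on $\Fix(c)$) to read off the Cartan decomposition with $W,W^t\in\m^\C$ and $w\in\k^\C$, and invoke the loop-group characterisation of harmonic maps into symmetric spaces; the cyclic symmetry $\tau$ is indeed irrelevant at this stage. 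Your Lie-theoretic bookkeeping in the second paragraph checks out (diagonalising $\De$ by a real orthogonal $P$ and then rescaling the $(-1)$-eigenspace by $\sqrt{-1}$ turns $c$ into complex conjugation and $\si$ into $X\mapsto -X^t$, so $\Fix(\si)\cap\Fix(c)\cong\so_{n+1}$). The one place where you assert rather than argue is the very last step, that \emph{the monodromy of the frame around a loop in $U$ lies in $\SO_{n+1}$}: on a non-simply-connected $U$ (such as $\C^\ast$, the case of principal interest in the paper) this is a genuine claim and not a formality --- a flat $\g$-valued connection can certainly have monodromy outside $K$, and the harmonic map only descends from the universal cover when the monodromy is absorbed by $K$. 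In \cite{GuLiXX} the frame is constructed on $U$ directly via the Iwasawa factorisation $L=FB$ of a globally defined $L$, which sidesteps the monodromy issue entirely; with your direct-integration approach you would need to supply an argument (or, more modestly, restrict the statement to simply connected $U$).
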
 

More generally, it was shown in \cite{Du93} (see also \cite{CoSc05}) that the tt* equations are --- locally --- the equations for (pluri)harmonic maps into 
$\SL_{n+1}\R/\SO_{n+1}$, together with a homogeneity condition (which amounts to the radial condition in our situation).  This is the basis for the geometric interpretation of the tt* equations, as $\SL_{n+1}\R/\SO_{n+1}$ is the space of 
inner products on $\R^{n+1}$.  A solution of the tt* equations can thus be regarded as a (trivial) vector bundle of rank $n+1$ on $U$ equipped with a metric satisfying various natural conditions.  In fact it is a (trivial) harmonic bundle in the sense of \cite{Si92} with an additional real structure and homogeneity property.

Such bundles (not necessarily trivial) have been well studied.  They generalize variations of polarized Hodge structures, and they appear in 
\cite{Bar01},\cite{He03},\cite{KaKoPa07} in connection with semi-infinite variations of Hodge structures, pure polarised TERP structures, or noncommutative variations of Hodge structures.   A special role is played by those bundles which arise \ll from geometry\rrr, e.g.\ from variations of polarized Hodge structures on specific manifolds.   

The relation with the discussion of the previous section is as follows. 
The \ll magical\rr solutions of the tt* equations predicted by Cecotti and Vafa are expected to be

\no(1) globally defined on $\C^\ast$,

\no(2) characterized by initial/asymptotic conditions,

\no(3) and have integral Stokes data.

\no We shall refer to such solutions as \ll field-theoretic\rrr.  Many of them appear to be of algebro-geometric origin, i.e.\ the corresponding field theories are constructed from unfoldings of singularities (Landau-Ginzburg models) or from quantum cohomology (nonlinear sigma models). 
 
Cecotti and Vafa predicted the existence of the global solutions on the basis of the physical interpretation of the monodromy data and holomorphic data:  
the Stokes data counts solitons at the \ll infra-red point\rr $z=\infty$ and the holomorphic data encodes chiral charges at the \ll ultra-violet point\rr $z=0$.  These are fixed points for 
the renormalization group flow, and it is this flow that is governed by the tt* equations.   For the \ll physical\rr solutions the data at $z=0$ and $z=\infty$ should have integrality properties.

Thus, it has been predicted that the tt* equations admit certain  \ll globally smooth\rr solutions with extremely rich geometrical meaning.  Our results confirm this prediction in the case where $w_0,\dots,w_n$ reduce to two independent functions (in this situation $n=3$, $4$, or $5$). First, in this case, we describe all solutions of  the tt*-Toda equations satisfying property (1).  The fact that such solutions are in one to one correspondence with asymptotic data at $z=0$ can be proved by using a monotone iteration scheme based on \cite{GuLiXX}.  This technique is well known for nonlinear scalar p.d.e., but novel for systems.  
Then, by computing the monodromy data and holomorphic data explicitly, we shall verify properties (2), (3).  
The solutions with integer Stokes data will be discussed in detail in \cite{GuLi2XX}). 

We remark that the holomorphic data is familiar to
differential geometers as the generalized Weierstrass representation, or DPW representation, of a harmonic map.  The link between the p.d.e.\ solution and the holomorphic data is given by the Iwasawa factorization $L=FB$ of a certain loop group valued function $L$ (with $L^{-1}dL=\tfrac1\la \eta dz$).  This is explained in detail in section 4 of \cite{GuLiXX}.
It is here that the crucial difference between the Toda equations and the tt*-Toda equations can be seen.   For the Toda equations, it is easy to construct solutions with properties (1) and (2),  because the relevant Lie group is the compact group $\SU_{n+1}$, and in this case  the Iwasawa factorization $L=FB$ holds on the entire domain of $L$. In contrast, for
the tt*-Toda equations we have the noncompact group $\SL_{n+1}\R$, and the Iwasawa factorization $L=FB$ holds (in general) only on some open subset $U^\pr$ of the domain $U$ of $L$.  Proving that $U^\pr=U$ is equivalent to proving that the solution of the p.d.e.\ has no singularities, and this is what we shall do.
The same factorization problem arises in the construction of other types of harmonic maps with important differential geometric interpretations (see, for example, \cite{Do08}).  As few examples are known where global existence can be proved in the noncompact case, our examples are of interest in this wider context.

A detailed statement of results is given in section \ref{results}.
In section \ref{pdetheorem} we prove the basic existence theorem (Theorem A),
and in section \ref{stokes} we compute the monodromy data of the solutions (Theorem B).
All methods used in this article are \ll low-tech\rrr, and our proofs are essentially self-contained.  We hope that this will contribute to a better understanding of the tt* equations, which have been treated until now only by indirect methods.

This article can be read independently of \cite{GuLiXX}, although our proof of Theorem A will make reference to \cite{GuLiXX} in order to avoid repetition.   In \cite{GuLiXX} we constructed a family of globally smooth solutions of the tt*-Toda equations, and we identified a finite number of \ll field-theoretic solutions\rr amongst them.  In this article we complete the picture by constructing {\em all} globally smooth solutions, and we describe {\em all} field-theoretic solutions.   We do this 
only in the case where $w_0,\dots,w_n$ are equivalent to two unknown functions; the general case will be treated in a subsequent article.
In \cite{GuItLi2XX} we give an independent proof of the existence of solutions, from the isomonodromy point of view.

Acknowledgements: 
The first author was partially supported by a grant from the JSPS, and the second author by NSF grant 
DMS-1001777.  Both are grateful to Taida Institute for Mathematical Sciences for financial support and hospitality during their visits in 2011, when much of this work was done.   The authors thank Claus Hertling for his comments on an earlier version of the paper, and also Takuro Mochizuki for his interest in this project.  Mochizuki has given a proof of our main results from the viewpoint of the Hitchin-Kobayashi correspondence in his preprint \ll Harmonic bundles and Toda lattices with opposite sign\rrr,
arXiv:\  1301.1718.

\section{Results}\label{results}

For $a,b>0$, we consider the system
\begin{equation}\label{pde}
\begin{cases}
u_{z\zbar}&= \ e^{au} - e^{v-u} 
\\
v_{z\zbar}&= \ e^{v-u} - e^{-bv}
\end{cases}
\end{equation}
for $u,v:\C^\ast\to\R$.
This is equivalent to (\ref{ost}) and (\ref{as}) in the ten cases where 
$w_0,\dots,w_n$ reduce to two unknown functions  (and in such cases we have $a,b\in\{1,2\}$).  This data is given in Table \ref{tableofcases}.  For example, in the case labelled 4a, condition (\ref{as}) says that $w_0+w_3=0$ and $w_1+w_2=0$, 
so the tt*-Toda equations reduce to (\ref{pde})
with $a=b=2$ if we put $u=2w_0$, $v=2w_1$.  

Evidently the cases $(a,b)=(1,2)$ and $(2,1)$ correspond under the transformation $(u,v)\mapsto(-v,-u)$, so there are in fact just three distinct p.d.e.\ systems.

\begin{table}[h]
\renewcommand{\arraystretch}{1.3}
\begin{tabular}{c||cc|cc|cc}\label{cases}
label & $l$ & $n\!\!+\!\!1\!\!-\!l$ & $u$ & $v$ & $a$ & $b$
\\
\hline
4a & $4$ & 0 & $2w_0$ & $2w_1$ & 2 & 2  
\\
4b & $2$ & $2$ & $2w_3$ & $2w_0$ & 2 & 2
\\
\hline
5a & $5$ & 0 & $2w_0$ & $2w_1$ & 2 & 1 
\\
5b & $3$ & $2$ & $2w_4$ & $2w_0$ & 2 & 1
\\
\hline
5c & $4$ & $1$ & $2w_0$ & $2w_1$ & 1 & 2 
\\
5d & $1$ & $4$ & $2w_1$ & $2w_2$ & 1 & 2  
\\
5e & $2$ & $3$ & $2w_4$ & $2w_0$ & 1 & 2 
\\
\hline
6a & $5$ & $1$ & $2w_0$ & $2w_1$ & 1 & 1 
\\
6b & $1$ & $5$ & $2w_1$ & $2w_2$ & 1 & 1 
\\
6c & $3$ & $3$ & $2w_5$ & $2w_0$ & 1 & 1   
\end{tabular}
\bigskip
\caption{}\label{tableofcases}
\end{table}

\no{\bf Theorem A:} {\em
Let $a,b>0$.   For any $(\ga,\de)$ in the triangular region 
\[
\text{$\ga\ge -2/a$, $\de\le 2/b$, $\ga-\de\le2$}
\]
(Fig.\ \ref{theregion})
the system (\ref{pde})
has a unique solution $(u,v)$ such that
\begin{gather*}
\ u(z) \to 0,
\ v(z) \to 0\ 
\text{as}\ \vert z\vert\to \infty
\\
\ u(z) = (\ga+o(1)) \log \vert z\vert,
\ v(z) = (\de+o(1)) \log \vert z\vert\
\text{as}\ \vert z\vert\to 0.
\end{gather*}
When $(\ga,\de)$ is in the interior of the region, we have
\[
\ u(z)=\ga\log\vert z\vert + O(1),
\ v(z)=\de\log\vert z\vert + O(1)\ 
\text{as}\ \vert z\vert\to 0.
\]
Since rotation of the variable $z$ does not affect these asymptotic conditions,  the functions $u,v$ depend only on $\vert z\vert$. 
}

\begin{figure}[h]
\begin{center}
\includegraphics[scale=0.6, trim= 0 300 0 250]{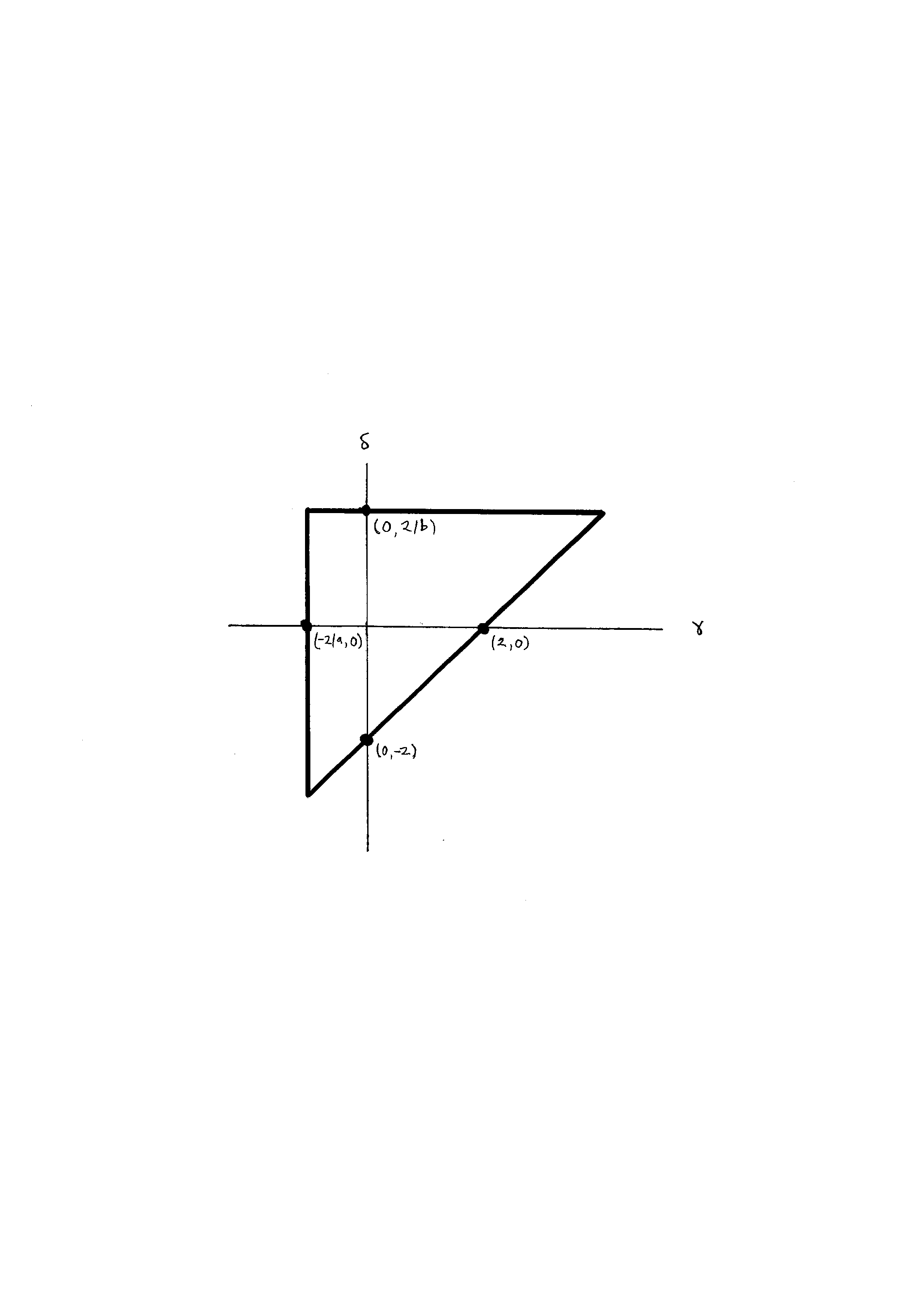}
\end{center}
\caption{}\label{theregion}
\end{figure}

\begin{remark}\label{radialasymptotics}  It should be noted that we do not assume $a,b\in\{1,2\}$ here, nor do we assume (a priori) that the solutions are radial.  In 
an appendix to \cite{GuItLi2XX}
we 
show that any radial solution which is smooth on $\C^\ast$ must satisfy the above asymptotic conditions at $z=0, \infty$.  Thus, Theorem A accounts for {\em all} radial solutions which are smooth on $\C^\ast$.
\end{remark}

\begin{remark} The asymptotic conditions at zero  can be written in the form 
$2w_i(z) = (\ga_i+o(1)) \log \vert z\vert$, $0\le i\le n$, 
if we define $\ga_i$ in terms of $\ga,\de$ according to Table \ref{tableofcases}.  For example, in case 4b, we define $\ga_0=\de$, $\ga_1=-\de$,$\ga_2=-\ga$,$\ga_3=\ga$.
\end{remark}

\begin{remark} 
For the case $a=b=2$, these solutions belong
to the class of solutions  of the periodic 2D
cylindrical Toda equations  which was introduced in
\cite{Wi97} via explicit  Fredholm  determinant  formulae. These formulae were then used in \cite{TrWi98} in order to analyze the
asymptotic behaviour of the solutions at $0$ and 
$\infty$.  We shall discuss the relation of \cite{Wi97} and \cite{TrWi98} with our work in a subsequent article.
\end{remark}

For each such solution, the Stokes data of the associated meromorphic o.d.e.\
reduces to two real numbers $s_1^\R,s_2^\R$, and we have:

\no{\bf Theorem B:} {\em  For each solution appearing in Theorem A, the Stokes data $s_1^\R,s_2^\R$ is given in terms of the asymptotic data $(\ga,\de)$ as follows.

\no (i) Cases 4a, 4b:

\no$\pm s_1^\R = 2\cos \tfrac\pi4 {\scriptstyle (\ga+1)} +  2\cos \tfrac\pi4 {\scriptstyle(\de+3)}$

\no$-s_2^\R = 2+4\cos \tfrac\pi4 {\scriptstyle(\ga+1)} \, \cos \tfrac\pi4 {\scriptstyle(\de+3)}$

\no (ii) Cases 5a, 5b:

\no\ \ $s_1^\R = 1+2\cos \tfrac\pi5 {\scriptstyle(\ga+6)}  + 2\cos \tfrac\pi5 {\scriptstyle(\de+8)}$

\no$-s_2^\R = 2
+2\cos \tfrac\pi5 {\scriptstyle(\ga+6)}  +  2\cos \tfrac\pi5 {\scriptstyle(\de+8)}
+4\cos \tfrac\pi5 {\scriptstyle(\ga+6)} \, \cos \tfrac\pi5 {\scriptstyle(\de+8)}$

\no (iii) Cases 5c, 5d, 5e:

\no
\ \ $s_1^\R = 1+2\cos \tfrac\pi5 {\scriptstyle(\ga+2)}  +  2\cos \tfrac\pi5 {\scriptstyle(\de+4)}$

\no$-s_2^\R = 2
+2\cos \tfrac\pi5 {\scriptstyle(\ga+2)}  +  2\cos \tfrac\pi5 {\scriptstyle(\de+4)}
+4\cos \tfrac\pi5 {\scriptstyle(\ga+2)} \, \cos \tfrac\pi5 {\scriptstyle(\de+4)}$

\no (iv) Cases 6a, 6b, 6c:

\no
$\pm s_1^\R = 2\cos \tfrac\pi6 {\scriptstyle(\ga+2)}  +  2\cos \tfrac\pi6 {\scriptstyle(\de+4)}$

\no$-s_2^\R = 1+4\cos \tfrac\pi6 {\scriptstyle(\ga+2)} \, \cos \tfrac\pi6 {\scriptstyle(\de+4)}$
}

\no For the even-dimensional cases, our calculation does not determine the sign of $s_1^\R$.  These signs are determined in \cite{GuItLi2XX}.  It is then an elementary matter to see that the correspondence between $(\ga,\de)$ and $(s_1^\R,s_2^\R)$ in Theorem B is bijective.  The sign is irrelevant for the classification of solutions in the $(\ga,\de)$ region with integral Stokes data, as the sign change corresponds to the involution $(\ga,\de)\mapsto(-\de,-\ga)$ of this region. These solutions are listed in Table \ref{integralsolutions}.

\begin{table}[h]
\renewcommand{\arraystretch}{1.5}
\begin{tabular}{c|c|c|c}
Cases 4a,4b & Cases 5a,5b & Cases 5c,5d,5e & Cases 6a,6b,6c
 \\
\hline
\sss $(3,1)$ & \sss $(4,2)$ & \sss $(3,1)$ & \sss $(4,2)$
\\
[-6pt]
\sss $(\tfrac{5}{3},1)$ & \sss $(\tfrac{7}{3},2)$ & \sss $(\tfrac{4}{3},1)$ & \sss $(2,2)$
\\
[-6pt]
\sss $(1,1)$ &  \sss $(\tfrac{3}{2},2)$ & \sss $(\tfrac{1}{2},1)$ &  \sss $(1,2)$
\\
[-6pt]
\sss $(\tfrac{1}{3},1)$ &  \sss $(\tfrac{2}{3},2)$ &  \sss $(-\tfrac{1}{3},1)$ &  \sss $(0,2)$
\\
[-6pt]
\sss $(-1,1)$ &  \sss $(-1,2)$ & \sss $(-2,1)$ &  \sss $(-2,2)$
\\
\hline
\sss $(-1,-\tfrac13)$ & \sss $(-1,\tfrac13)$ &  \sss $(-2,-\tfrac23)$ & \sss  $(-2,0)$
\\
[-6pt]
\sss  $(-1,-1)$ &  \sss $(-1,-\tfrac12)$ &  \sss $(-2,-\tfrac32)$ &  \sss $(-2,-1)$
 \\
 [-6pt]
\sss $(-1,-\tfrac53)$ &  \sss $(-1,-\tfrac43)$ &  \sss $(-2,-\tfrac73)$ &  \sss $(-2,-2)$
 \\
 [-6pt]
 \sss $(-1,-3)$ &  \sss $(-1,-3)$ &  \sss $(-2,-4)$ &  \sss $(-2,-4)$
 \\
\hline
\sss $(\tfrac{1}{3},-\tfrac{5}{3})$ & \sss $(\tfrac{2}{3},-\tfrac{4}{3})$ & \sss $(-\tfrac{1}{3},-\tfrac{7}{3})$ &  \sss $(0,-2)$
 \\
 [-6pt]
\sss $(1,-1)$ &  \sss $(\tfrac32,-\tfrac12)$ &  \sss $(\tfrac{1}{2},-\tfrac{3}{2})$ &  \sss $(1,-1)$
 \\
 [-6pt]
\sss $(\tfrac{5}{3},-\tfrac{1}{3})$ &  \sss $(\tfrac{7}{3},\tfrac{1}{3})$ &  \sss $(\tfrac{4}{3},-\tfrac{2}{3})$ &  \sss $(2,0)$
\\
\sss $(\tfrac13,-\tfrac13)$  &  \sss $(\tfrac23,\tfrac13)$ &  \sss $(-\tfrac{1}{3},-\tfrac{2}{3})$ &  \sss $(0,0)$
\\
[-6pt]
\sss $(0,0)$ &  \sss $(\tfrac14,\tfrac34)$ &  \sss $(-\tfrac34,-\tfrac14)$ &  \sss $(-\tfrac12,\tfrac12)$
\\
[-6pt]
\sss $(-\tfrac13,\tfrac13)$ & \sss $(-\tfrac16,\tfrac76)$ &  \sss $(-\tfrac76,\tfrac16)$  & 
 \sss $(-1,1)$
\\
 \sss $(1,-\tfrac13)$ &  \sss $(\tfrac32,\tfrac13)$ &  \sss $(\tfrac12,-\tfrac23)$ & \sss $(1,0)$
\\
[-6pt]
\sss $(\tfrac35,\tfrac15)$ & \sss $(1,1)$ &  \sss $(0,0)$ & \sss $(\tfrac25,\tfrac45)$
 \\
 [-6pt]
\sss  $(-\tfrac15,-\tfrac35)$ & \sss $(0,0)$ & \sss $(-1,-1)$ & \sss $(-\tfrac45,-\tfrac25)$
 \\
 [-6pt]
\sss $(\tfrac13,-1)$ & \sss $(\tfrac23,-\tfrac12)$ &  \sss $(-\tfrac13,-\tfrac32)$ & \sss $(0,-1)$
\end{tabular}
\bigskip
\caption{$(\ga,\de)$ for the solutions with integral Stokes data $s_1^\R, s_2^\R$.  The corresponding Stokes data and holomorphic data are listed in \cite{GuLi2XX}.}\label{integralsolutions}
\end{table}

The holomorphic data will be discussed in detail in \cite{GuLi2XX}, but we conclude this section with some brief comments. First, the smoothness of our solutions $w_i$ near $z=0$ (and their radial nature) implies that the holomorphic data must be of the special form $p_i(z)=c_i z^{k_i}$.  (Conversely, if $p_i(z)=c_i z^{k_i}$ and $k_i\ge -1$ for all $i$ then $w_i$ is smooth near $z=0$ and $w_i=w_i(\vert z\vert)$.) These $k_0,\dots,k_n$ are equivalent to $\ga,\de$ (see Table 2 of \cite{GuLiXX} or Table 2 of \cite{GuLi2XX} for explicit formulae).   Using this holomorphic data we can try to identify the geometrical objects responsible for the solutions.

The first two blocks of Table \ref{integralsolutions} correspond to quantum cohomology rings of certain complete intersection varieties.  We denote by $\X^{v_0,\dots,v_p}_{d_1,\dots,d_m}$ the variety given by the intersection of $m$ hypersurfaces of degrees $d_1,\dots,d_m$ in weighted projective space 
$\P^{v_0,\dots,v_p}$.  The varieties which arise from our solutions are shown in Table \ref{completeintersections}.  These points lie on the top and left-hand edges of the region (Fig.\ \ref{theregion}) of Theorem A. 

\begin{table}[h]
\renewcommand{\arraystretch}{1.5}
\begin{tabular}{c|c|c|c}
Cases 4a,4b & Cases 5a,5b & Cases 5c,5d,5e & Cases 6a,6b,6c
 \\
\hline
$\P^3=\P^{1,1,1,1}$ &  $\P^4=\P^{1,1,1,1,1}$ &  $\P^{1,1,1,2}$ & $\P^{1,1,1,1,2}$
\\
[0pt]
$\X^{1,1,1,6}_{2,3}$ &  $\X^{1,1,1,1,6}_{2,3}$ &    $\X^{1,1,6}_{3}$ &  $\X^{1,1,1,6}_{3}$
\\
[0pt]
$\X^{1,1,4}_{2}$ &  $\X^{1,1,1,4}_{2}$ &  $\P^{1,4}$ &  $\P^{1,1,4}$
\\
[0pt]
$\P^{1,3}$ &  $\P^{1,1,3}$ &  $\P^{2,3}$ &  $\P^{1,2,3}$
\\
[0pt]
$\P^{2,2}$ &  $\P^{1,2,2}$ &  $\P^{1,2,2}$ &  $\P^{2,2,2}$
\\
\hline
$\P^{1,3}$ &  $\P^{2,3}$ &  $\P^{1,1,3}$ &  $\P^{1,2,3}$
\\
[0pt]
$\X^{1,1,4}_{2}$ &  $\P^{1,4}$ &  $\X^{1,1,1,4}_{2}$ &  $\P^{1,1,4}$
 \\
 [0pt]
$\X^{1,1,1,6}_{2,3}$ &  $\X^{1,1,6}_{3}$ &  $\X^{1,1,1,1,6}_{2,3}$ &  $\X^{1,1,1,6}_{3}$
 \\
 [0pt]
$\P^3=\P^{1,1,1,1}$ &  $\P^{1,1,1,2}$ &  $\P^4=\P^{1,1,1,1,1}$ &  $\P^{1,1,1,1,2}$
\end{tabular}
\bigskip
\caption{Quantum cohomology interpretation for solutions with integral Stokes data.}\label{completeintersections}
\end{table}

\no The other entries of Table \ref{integralsolutions} are discussed in \cite{GuLi2XX}.  For
example (an unfolding of) the $A_4$ singularity appears as {\sss$(\tfrac35,\tfrac15)$} or 
{\sss$(-\tfrac15,-\tfrac35)$} in the first column, and the $A_5$ singularity appears as 
{\sss$(\tfrac23,\tfrac13)$} in column two or 
{\sss $(-\tfrac13,-\tfrac23)$} in column three. These are interior points of the region.

\section{Existence of solutions: proof of Theorem A}\label{pdetheorem}

In this section we prove Theorem A.  For the tt*-Toda equations, i.e.\ $a,b\in\{1,2\}$, this amounts to solving a certain Riemann-Hilbert problem, and we intend to discuss these aspects elsewhere.  Here, however, we give an elementary approach using the monotone iteration method for solving nonlinear elliptic partial differential equations, and this method works for arbitrary $a,b>0$.  Although the method is well known for scalar p.d.e., it rarely applies to systems, and it applies to our system (\ref{pde}) only through a fortuitous combination of circumstances.  

The proof of Theorem A in the case $a=b=2$ and $\ga\de\ge 0$ was given in detail in \cite{GuLiXX}.  In  section \ref{guli1} we extend that proof to the case $a=b>0$ and $\ga\de\ge 0$.  In section \ref{guli1extended} we shall extend the proof further to the case $\ga\de\le 0$. In section \ref{allab} we explain how the case $a=b>0$ implies the general case $a,b>0$. 

For notational convenience in this section\footnote{Outside this section we always use the
notation of equation (\ref{ost}); in that notation we have 
$2w_i(z) = (\ga_i+o(1)) \log \vert z\vert\  \text{as}\ \vert z\vert\to 0$.} (and ease of comparison with section 3 of \cite{GuLiXX}), we restate the equations as
\begin{equation}\label{guli1pde}
\begin{cases}
(w_0)_{z\zbar}&= \ e^{aw_0} - e^{w_1-w_0} 
\\
(w_1)_{z\zbar}&= \ e^{w_1-w_0} - e^{-bw_1}
\end{cases}
\end{equation}
for $w_0,w_1:\C^\ast\to\R$ (where $a,b>0$).
Our objective (Theorem A) is to prove that, for any $(\ga_0,\ga_1)$ in the region 
\begin{equation}\label{inequalities}
\text{$\ga_0\ge -2/a$, $\ga_1\le 2/b$, $\ga_0-\ga_1\le2$}
\end{equation}
there is a unique solution $(w_0,w_1)$ such that
\begin{gather*}
\ w_i(z) \to 0\ 
\text{as}\ \vert z\vert\to \infty
\\
\ w_i(z) = (\ga_i+o(1)) \log \vert z\vert\ 
\text{as}\ \vert z\vert\to 0.
\end{gather*}
Conditions (\ref{inequalities}) are explained in Remark 3.2 (i),(iii) of
\cite{GuLiXX}.
We emphasize that only solutions which are smooth on $\C^\ast$ are to be discussed.  Statements such as $f\le g$ mean $f(z)\le g(z)$ for all $z\in\C^\ast$.
We do not assume in advance that $w_i$ depends only on $\vert z\vert$; however, this property follows from the uniqueness of the solution.

\subsection{The case $a=b>0$, $\ga_0\ga_1\ge 0$}\label{guli1}

The case $\ga_0,\ga_1\le0$ is analogous to the case $\ga_0,\ga_1\ge0$, so
we just treat the latter.
We shall explain how to extend the proof for $a=b=2$ in \cite{GuLiXX} to the case  $a=b>0$; this will also serve as a review of the method of \cite{GuLiXX}.
The method has three main steps:

\no(a) existence of a supersolution $(0,0)$ and a subsolution $(q_0,q_1)$ (this means
showing that any solution $(w_0,w_1)$ of (\ref{guli1pde}) satisfies  $q_i\le w_i\le 0$).

\no(b) existence of a maximal solution $(w_0,w_1)$ (i.e.\ a solution $(w_0,w_1)$ exists, and
if $(\tilde w_0,\tilde w_1)$
is any other solution, then $\tilde w_i\le w_i$).

\no(c) uniqueness of the solution.

\no The proof is carried out first for the case where $\ga_0-\ga_1 < 2$, $\ga_1 < 2/b$,
then extended to $\ga_0-\ga_1 \le 2$, $\ga_1 \le 2/b$ by a limiting argument. 

\no{\em Proof of (a):}  The fact that $w_0,w_1\le0$ follows easily from the maximum principle (see Proposition 3.3 and the following page of \cite{GuLiXX}).   As in \cite{GuLiXX} we have to construct lower bounds for any solution  $(w_0,w_1)$ of (\ref{guli1pde}).    For this, let $h,q_0,q_1$ be the solutions of
\begin{align*}
&\begin{cases}
h_{z\zbar}= e^{ah} - 1\\
h(z)=(\ga_0+\ga_1+o(1))\log\vert z\vert\ \text{as}\ \vert z\vert\to 0
\end{cases}
\\
&\begin{cases}
(q_0)_{z\zbar}= e^{aq_0} - e^{h-2q_0}\\
q_0(z)=(\ga_0+o(1))\log\vert z\vert\ \text{as}\ \vert z\vert\to 0
\end{cases}
\\
&\begin{cases}
(q_1)_{z\zbar}= e^{2q_1-h} - e^{-aq_1}\\
q_1(z)=(\ga_1+o(1))\log\vert z\vert\ \text{as}\ \vert z\vert\to 0
\end{cases}
\end{align*}
on $\C^\ast$, where all solutions tend to $0$ as $\vert z\vert\to\infty$.   The existence and uniqueness of $h,q_0,q_1$ can be proved by standard p.d.e.\ methods and the maximum principle, under the assumption (\ref{inequalities}).   Furthermore we can prove that

\no(i) $h\le w_0+w_1\le 0$

\no(ii) $h \le q_0+q_1 \le  0$

\no(iii) $q_0\le w_0\le 0$ and $q_1\le w_1\le 0$

\no on $\C^\ast$.   For these proofs we refer to Lemma 3.5, Lemma 3.6 and  
Proposition 3.7 of \cite{GuLiXX}

\no{\em Proof of (b):}  
We obtain smooth functions $w_i^{(n)}$ ($n=0,1,2,\dots$) on $\C^\ast$ such that
\begin{equation}\label{decreasing}
q_i\le\cdots\le w_i^{(n+1)}\le w_i^{(n)}\le \cdots\le w_i^{(0)}\le 0
\end{equation}
in the following way.  

First, when $(\ga_0,\ga_1)$ lies in the smaller region
$[0,2)\times[0,2/a)$, we may take $w_i^{(0)}= 0$ and define
$(w_0^{(n+1)},w_1^{(n+1)})$ inductively from $(w_0^{(n)},w_1^{(n)})$
by solving the linear elliptic system 
\begin{equation*}
\begin{cases}
(w_0^{(n+1)})_{z\zbar} - (a+e^{-q_0}) w_0^{(n+1)} =
f_0(w_0^{(n)},w_1^{(n)},z)
\\
\ w_0^{(n+1)}(z)=\ga_0\log\vert z\vert+O(1)
\text{ at $0$}, 
\ w_0^{(n+1)}(z) \to 0 \text{ at $\infty$}
\end{cases}
\end{equation*}
where $f_0(s,t,z) =
e^{as} - e^{t-s} - (a+e^{-q_0})s$;
\begin{equation*}
\begin{cases}
(w_1^{(n+1)})_{z\zbar} - (e^{-q_0}+ae^{-aq_1}) w_1^{(n+1)} =
f_1(w_0^{(n)},w_1^{(n)},z)
\\
\ w_1^{(n+1)}(z)=\ga_1\log\vert z\vert+O(1)
\text{ at $0$}, 
\ w_1^{(n+1)}(z) \to 0
\text{ at $\infty$}
\end{cases}
\end{equation*}
where $f_1(s,t,z) =
e^{t-s} - e^{-at} - (e^{-q_0}+ae^{-aq_1})t$.

The subtracted terms on each side are chosen to make $\b f_0/\b s\le 0$ and $\b f_1/\b t\le 0$.
It is a key property of the tt*-Toda equations that $\b f_0/\b t\le 0$ and $\b f_0/\b s\le 0$
(see Remark 3.9 (i) of \cite{GuLiXX}).
This monotonicity allows us to establish property 
(\ref{decreasing}).  

The sequence $w_i^{(n)}$  converges to a smooth solution $w_i$ on $\C^\ast$.  To prove that it is a maximal solution, the maximum principle is used.  

To construct (maximal) solutions for  the range $\ga_0\ge0$, $0\le \ga_1<2/b$,
$\ga_0-\ga_1<2$ 
we make use of the solutions just constructed.
Let $(g_0,g_1)$ be such a solution with
$g_i(z) =\tilde\ga_i \log \vert z\vert+O(1)$ as $\vert z\vert\to 0$
and $g_i(z) \to 0$ as $\vert z\vert\to \infty$.
We choose $(\tilde\ga_0,\tilde\ga_1)
\in [0,2)\times[0,1/a)$
so that
$0\le \tilde\ga_0<\ga_0$, $0\le \tilde\ga_1<\ga_1$
and $\tilde\ga_1>\ga_0-2$.
Then we take $(w_0^{(0)},w_1^{(0)})=(g_0,g_1)$ as the first step in the iteration
given by the modified linear system
\begin{equation*}
\begin{cases}
(w_0^{(n+1)})_{z\zbar} - (a+e^{g_1-q_0}) w_0^{(n+1)} =
f_0(w_0^{(n)},w_1^{(n)},z)
\\
\ w_0^{(n+1)}(z)=\ga_0\log\vert z\vert+O(1)
\text{ at $0$}, 
\ w_0^{(n+1)}(z) \to 0 \text{ at $\infty$}
\end{cases}
\end{equation*}
where $f_0(s,t,z) =
e^{as} - e^{t-s} - (a+e^{g_1-q_0})s$;
\begin{equation*}
\begin{cases}
(w_1^{(n+1)})_{z\zbar} - (e^{g_1-q_0}+ae^{-aq_1}) w_1^{(n+1)} =
f_1(w_0^{(n)},w_1^{(n)},z)
\\
\ w_1^{(n+1)}(z)=\ga_1\log\vert z\vert+O(1)
\text{ at $0$}, 
\ w_1^{(n+1)}(z) \to 0
\text{ at $\infty$}
\end{cases}
\end{equation*}
where $f_1(s,t,z) =
e^{t-s} - e^{-at} - (e^{g_1-q_0}+ae^{-aq_1})t$.
Again the sequence $w_i^{(n)}$  converges to a maximal smooth solution $w_i$ on $\C^\ast$.  

\no{\em Proof of (c):}   

By integration we obtain 
\begin{gather*}
\smallint_{\!\R^2\,}\ -(1-e^{aw_0}) + (1-e^{w_1-w_0})
 = -2\pi \ga_0
\\
\smallint_{\!\R^2\,}\ -(1-e^{w_1-w_0}) + (1-e^{-aw_1})
= -2\pi \ga_1.
\end{gather*}
Thus 
\begin{equation}\label{sum}
\smallint_{\!\R^2\,}\ (e^{aw_0}-e^{-aw_1}) = -2\pi (\ga_0+\ga_1)
\end{equation}
holds for any solution $(w_0,w_1)$.  Suppose $(\tilde w_0, \tilde w_1)$ is a solution,
and $(w_0,w_1)$ is the maximal solution obtained in (b).  Then we have
\[
e^{a\tilde w_0}-e^{-a\tilde w_1} \ \le\  e^{aw_0}-e^{-aw_1}.
\]
Since both satisfy (\ref{sum}), we must have $\tilde w_0=w_0$ and
$\tilde w_1=w_1$.   This completes the proof of (c).

\subsection{The case $\ga_0\ga_1 < 0$}\label{guli1extended}

Let us consider the system (\ref{guli1pde}) with $a=b>0$ where $\ga_0,\ga_1$ satisfy (\ref{inequalities}) and also $\ga_0>0$, $\ga_1< 0$ (the case 
$\ga_0<0$, $\ga_1>0$ is similar).  In particular $\ga_0<2$ and $\ga_1>-2$.
We assume first that $\ga_0-\ga_1<2$;  the
case $\ga_0-\ga_1\le 2$ will follow as in section \ref{guli1}.  Thus we have
the system
\begin{equation}\label{guli1pdeab1}
\begin{cases}
(w_0)_{z\zbar}&= \ e^{aw_0} - e^{w_1-w_0} 
\\
(w_1)_{z\zbar}&= \ e^{w_1-w_0} - e^{-aw_1}
\end{cases}
\end{equation}
with boundary conditions
\[
w_i(z) = \ga_i\log \vert z\vert+O(1) \ \text{as $\vert z\vert\to 0$},
\]
and
$w_i(z) \to 0$ as $\vert z\vert\to \infty$.

We divide the proof into steps (a), (b), (c) as above.

(a) Let $\ga_1^\ast\in(-2,\ga_1)$.  The point $(0,\ga_1^\ast)$
lies in the region of applicability of section \ref{guli1}, so
we have a unique solution $(w_0^\ast,w_1^\ast)$
such that 
\begin{gather*}
\ w_0^\ast(z) = O(1),
\ w_1^\ast(z) = \ga_1^\ast\log \vert z\vert+O(1)
\ \text{as}\ \vert z\vert\to 0.
\end{gather*}
Similarly, for any $\bar\ga_0\in(\ga_0,2)$, 
there is a unique solution $(\bar w_0,\bar w_1)$ corresponding to
$(\bar\ga_0,0)$, i.e.\ such that 
\begin{gather*}
\ \bar w_0(z) = \bar\ga_0\log \vert z\vert+O(1),
\ \bar w_1(z) =  O(1)
\ \text{as}\ \vert z\vert\to 0.
\end{gather*}
Moreover, from section \ref{guli1}, we know that
$\bar w_i<0<w_i^\ast$.

We claim that any solution $(w_0,w_1)$ of (\ref{guli1pdeab1}) must satisfy
\[
\bar w_i\le w_i\le w_i^\ast.
\]
This will establish lower and upper bounds.

We shall prove first that $\bar w_0 \le w_0$.
Let us suppose that $\bar w_0(z)>w_0(z)$
for some $z$.   Then the boundary conditions
(in particular $\ga_0 < \bar\ga_0$)  imply that
\[
\bar w_0(z_0)-w_0(z_0)=\maxx \ (\bar w_0-w_0) > 0
\]
for some $z_0\in\C^\ast$.    Hence the maximum principle gives
\begin{align*}
0&\ge (\bar w_0-w_0)_{z\zbar}(z_0) \\
&=
e^{a\bar w_0(z_0)} - e^{\bar w_1(z_0)-\bar w_0(z_0)} - (e^{aw_0(z_0)} - e^{w_1(z_0)-w_0(z_0)})
\end{align*}
from which it follows that
$e^{\bar w_1(z_0)-\bar w_0(z_0)}  >  e^{w_1(z_0)-w_0(z_0)}$, which
implies that $\max\,(\bar w_1- w_1) > 0$.

Similarly, the boundary conditions (in particular $\ga_1 < 0$) imply that
\begin{equation*}
\bar w_1(z_1)-w_1(z_1) = 
\maxx \ (\bar w_1-w_1) > 0
\end{equation*}
for some $z_1\in\C^\ast$. 
Therefore 
\begin{equation}\label{1}
\bar w_1(z_1)-w_1(z_1) \ge \bar w_1(z_0)-w_1(z_0) > 
\bar w_0(z_0)-w_0(z_0) > 0.
\end{equation}

Applying the maximum principle to the second equation, we have
\begin{align*}
0&\ge (\bar w_1-w_1)_{z\zbar}(z_1)
\\
&=e^{\bar w_1(z_1)-\bar w_0(z_1)} - e^{-a\bar w_1(z_1)}
- (e^{w_1(z_1)-w_0(z_1)} - e^{-aw_1(z_1)}).
\end{align*}
Hence
\[
e^{w_1(z_1)-w_0(z_1)} \ge (e^{-aw_1(z_1)} - e^{-a\bar w_1(z_1)})+
e^{\bar w_1(z_1)-\bar w_0(z_1)} > e^{\bar w_1(z_1)-\bar w_0(z_1)}
\]
as we know $e^{-aw_1(z_1)}-e^{-a\bar w_1(z_1)}>0$.   
We obtain
\[
\bar w_0(z_0)-w_0(z_0) \ge 
\bar w_0(z_1)-w_0(z_1) >
\bar w_1(z_1)-w_1(z_1),
\]
which contradicts (\ref{1}).  This completes the proof that
$\bar w_0 \le w_0$.

The other three inequalities in the claim can be proved in a similar way.

(b) Next we apply the monotone scheme to prove that there exists a maximal solution, making use of $(w_0^\ast,w_1^\ast)$ and $(\bar w_0,\bar w_1)$ as supersolution and subsolution, respectively. 

We consider the linear equations
\begin{equation}\label{2}
\begin{cases}
(w_0^{(n+1)})_{z\zbar} - (ae^{aw_0^\ast} + e^{w_1^\ast-\bar w_0})w_0^{(n+1)} =
f_0(w_0^{(n)},w_1^{(n)},z)
\\
(w_1^{(n+1)})_{z\zbar} - (e^{w_1^\ast - \bar w_0} + ae^{-a\bar w_1})w_1^{(n+1)} =
f_1(w_0^{(n)},w_1^{(n)},z)
\end{cases}
\end{equation}
subject to the boundary conditions
\[
w_i^{(n+1)}(z)=\ga_i\log\vert z\vert + O(1)\ \text{as $\vert z\vert\to 0$}
\]
and $w_i^{(n+1)}(z) \to 0$ as $\vert z\vert\to \infty$, where
\begin{gather*}
f_0(s,t,z)=e^{as}  -  e^{t-s}  -  (ae^{aw_0^\ast(z)}+ e^{w_1^\ast(z)-\bar w_0(z)})s,
\\
f_1(s,t,z)=e^{t-s}  -  e^{-at} - (e^{w_1^\ast(z) - \bar w_0(z)} + ae^{-a\bar w_1(z)})t.
\end{gather*}
Note that $w_0^\ast$ and $\bar w_1$ are bounded at $0$ (without singularity) and
\[
e^{w_1^\ast(z)-\bar w_0(z)} = O(1) \vert z\vert^{\ga_1^\ast - \bar\ga_0}
\]
there.  We may take $\ga_1^\ast - \bar\ga_0$ close to $\ga_1-\ga_0$,  so that
$\ga_1^\ast - \bar\ga_0 > -2$.   Thus equation (\ref{2}) is solvable for
$(w_0^{(n+1)},w_1^{(n+1)})$ if $(w_0^{(n)},w_1^{(n)})$ is given.  We
obtain $(w_0^{(n)},w_1^{(n)})$ for $n=0,1,2,\dots$ by starting with 
$(w_0^{(0)},w_1^{(0)}) = (\bar w_0,\bar w_1)$. 

Note that if $(s,t)$ satisfies
\[
\bar w_0 \le s \le w_0^\ast,\ \ \bar w_1 \le t \le w_1^\ast,
\]
then
\begin{align}\label{3a}
\tfrac{\b f_0}{\b s}(s,t) &= ae^{as} - ae^{aw_0^\ast(z)} + e^{t-s} - e^{w_1^\ast(z)-\bar w_0(z)} \le 0
\\
\label{3b}
\tfrac{\b f_0}{\b t}(s,t) &= -e^{t-s} \le 0.
\end{align}
Similarly
\begin{align}\label{4a}
\tfrac{\b f_1}{\b t}(s,t) &= e^{t-s} - e^{w_1^\ast(z)-\bar w_0(z)} + ae^{-at} - 
ae^{-a\bar w_1(z)} \le 0
\\
\label{4b}
\tfrac{\b f_1}{\b s}(s,t) &= -e^{t-s} \le 0.
\end{align}
We shall use this to show that
\begin{equation}\label{induction}
\bar w_i \le w_i^{(n)} \le w_i^{(n+1)} \le w_i^\ast,\quad i=0,1
\end{equation}
for $n=0,1,2,\dots$. 
We begin with the case $n=0$.

First we shall show that $\bar w_i \le w_i^{(0)} \le w_i^{(1)}$.  Since $\bar w_i=w_i^{(0)}$,
we just have to show that $\bar w_i \le w_i^{(1)}$. 
From (\ref{2}) we have
\begin{align*}
(w_0^{(1)})_{z\zbar} -  (ae^{aw_0^\ast}+ e^{w_1^\ast-\bar w_0})w_0^{(1)}
&= e^{a\bar w_0}- e^{\bar w_1-\bar w_0} - (ae^{aw_0^\ast}+ e^{w_1^\ast-\bar w_0})\bar w_0
\\
&=(\bar w_0)_{z\zbar} - (ae^{aw_0^\ast}+ e^{w_1^\ast-\bar w_0})\bar w_0
\end{align*}
hence
\[
( w_0^{(1)} - \bar w_0)_{z\zbar} - (ae^{aw_0^\ast}+ e^{w_1^\ast-\bar w_0})(w_0^{(1)}-\bar w_0)=0,
\]
and similarly
\[
( w_1^{(1)} - \bar w_1)_{z\zbar} - (e^{w_1^\ast-\bar w_0}+ ae^{-a\bar w_1})(w_1^{(1)}-\bar w_1)=0.
\]
Since $\lim_{\vert z\vert\to 0} (w_0^{(1)} - \bar w_0)(z) = +\infty$ and
$\lim_{\vert z\vert\to \infty} (w_0^{(1)} - \bar w_0)(z) = 0$, the
maximum principle implies that $\bar w_0 \le w_0^{(1)}$. A similar proof shows that
$\bar w_1 \le w_1^{(1)}$.

Next we shall show that $w_i^{(1)} \le w_i^\ast$.
Since
$\bar w_i\le w_i^{(0)}\le w_i^\ast$, properties (\ref{3a})-(\ref{3b}) give
\begin{align*}
f_0(  w_0^{(0)},w_1^{(0)}, z) &\ge f_0(  w_0^\ast,w_1^\ast, z) 
\\
&= e^{aw_0^\ast} - e^{w_1^\ast - w_0^\ast} - (ae^{aw_0^\ast} + e^{w_1^\ast - \bar w_0})w_0^\ast
\\
&= (w_0^\ast)_{z\zbar} - (ae^{aw_0^\ast} + e^{w_1^\ast - \bar w_0})w_0^\ast.
\end{align*}
Applying this in the equation for $w_0^{(1)}$, we have
\[
(w_0^{(1)})_{z\zbar} - (ae^{aw_0^\ast} + e^{w_1^\ast - \bar w_0})w_0^{(1)} 
\ge (w_0^\ast)_{z\zbar} - (ae^{aw_0^\ast} + e^{w_1^\ast - \bar w_0})w_0^\ast,
\]
so
\[
(w_0^\ast - w_0^{(1)})_{z\zbar} - 
(ae^{aw_0^\ast} + e^{w_1^\ast - \bar w_0})(w_0^\ast - w_0^{(1)}) \le 0.
\]
Since $\lim_{\vert z\vert\to 0} (w_0^\ast - w_0^{(1)})(z) = +\infty$ and
$\lim_{\vert z\vert\to \infty} (w_0^\ast - w_0^{(1)})(z) = 0$, the
maximum principle implies that $w_0^{(1)} \le w_0^\ast$.   Similarly we
can show that  $w_1^{(1)} \le w_1^\ast$.
This completes the
proof of (\ref{induction}) for $n=0$.  

By the same argument it is not difficult to prove
(\ref{induction}) for any $n=1,2,3,\dots$.

Since $w_i^{(n)}$ is bounded and monotonically increasing with respect to $n$, 
it converges to some $w_i$ in $C^2_{\text{loc}}(\R^2\setminus\{(0,0)\})$
which evidently satisfies (\ref{guli1pdeab1}).   This completes the proof of existence.

Next we claim that the above solution $(w_0,w_1)$ is minimal in the sense that
$w_i\le \tilde w_i$ 
for any other solution $(\tilde w_0,\tilde w_1)$
with the same $(\tilde \ga_0,\tilde \ga_1) = (\ga_0,\ga_1)$.  

To prove that $w_0\le \tilde w_0$,
we use the fact
already established that
$\bar w_i\le \tilde w_i\le w_i^\ast$.
Thus
\begin{align*}
(w_0^{(n+1)})_{z\zbar} - (ae^{aw_0^\ast} + e^{w_1^\ast-\bar w_0})w_0^{(n+1)} &= f_0(w_0^{(n)},w_1^{(n)},z)
\\
&\ge f_0(\tilde w_0,\tilde w_1,z)
\\
&=
(\tilde w_0)_{z\zbar} - (ae^{aw_0^\ast} + e^{w_1^\ast-\bar w_0})\tilde w_0
\end{align*} 
if we assume $\tilde w_i \ge w_i^{(n)}$.  We shall use this to prove that 
$\tilde w_0 \ge w_0^{(n+1)}$.
We have
\[
(w_0^{(n+1)} - \tilde w_0)_{z\zbar} - 
(ae^{aw_0^\ast} + e^{w_1^\ast-\bar w_0})(w_0^{(n+1)} - \tilde w_0)\ge 0
\]
because $w_0^{(n+1)} - \tilde w_0$ is continuous up to $z=0$. Since
$w_0^{(n+1)} - \tilde w_0\to 0$ as $\vert z\vert \to\infty$, the maximum principle
gives
\[
0\ge (w_0^{(n+1)} - \tilde w_0)_{z\zbar}(z_0) \ge
(ae^{aw_0^\ast} + e^{w_1^\ast-\bar w_0})(w_0^{(n+1)} - \tilde w_0)(z_0)
>0
\]
if
$(w_0^{(n+1)} - \tilde w_0)(z_0)=\maxx(w_0^{(n+1)} - \tilde w_0)>0$.
This contradiction shows that $w_0^{(n+1)}\le \tilde w_0$. 

Since $\tilde w_0 \ge w_0^{(n)}$ holds for $n=0$,  by induction
it holds for all $n\ge 0$.  
Similarly we can prove that $\tilde w_1 \ge w_1^{(n)}$ for all $n\ge 0$.
By taking the limit $n\to\infty$ we obtain $\tilde w_i \ge w_i$ for $i=0,1$. 

(c) The proof of uniqueness of the solution of (\ref{guli1pdeab1}) is the
same as that in part (c) of section \ref{guli1}. 

It remains to extend the result for the case $\ga_0-\ga_1<2$ to  the
case $\ga_0-\ga_1\le 2$.   Let us take a sequence $(\ga_0^{(n)},\ga_1^{(n)})$
in the interior of the region
such that $(\ga_0^{(n)},\ga_1^{(n)})\to(\ga_0,\ga_1)$,
for example $\ga_1^{(n)}=\ga_1$ with $\ga_0^{(n)}<\ga_0$.  By the
maximum principle, the corresponding solution 
$(w_0^{(n)},w_1^{(n)})$ of (\ref{2}) is monotone in $n$ and satisfies
\[
u_i < w_i^{(n)} < v_i
\]
where $(u_0,u_1)$, $(v_0,v_1)$ are the solutions
of (\ref{guli1pdeab1})
corresponding to $(0,2)$ and $(-2,0)$.  In the limit $n\to\infty$, 
$(w_0^{(n)},w_1^{(n)})$ converges to a solution of of (\ref{guli1pdeab1})
corresponding to $(\ga_0,\ga_1)$. Furthermore, this solution is minimal. Uniqueness of this solution follows by integrating (\ref{guli1pdeab1}).   For these solutions, we note that the weaker asymptotic property $w_i(z) = (\ga_i+o(1)) \log \vert z\vert$ holds at $0$.

\subsection{Arbitrary $a,b>0$}\label{allab}

It suffices to discuss the case $0<a<b$ and $\ga_0,\ga_1\ge0$.   The other cases may be dealt with as in sections \ref{guli1},  \ref{guli1extended}.

Let $(\bar w_0,\bar w_1)$ be the solution to
\begin{equation*}
\begin{cases}
(\bar w_0)_{z\zbar}&= \ e^{a\bar w_0} - e^{\bar w_1-\bar w_0} 
\\
(\bar w_1)_{z\zbar}&= \ e^{\bar w_1-\bar w_0} - e^{-a\bar w_1},
\end{cases}
\end{equation*}
and 
$(\tilde w_0,\tilde w_1)$ the solution to
\begin{equation*}
\begin{cases}
(\tilde w_0)_{z\zbar}&= \ e^{b\tilde w_0} - e^{\tilde w_1-\tilde w_0} 
\\
(\tilde w_1)_{z\zbar}&= \ e^{\tilde w_1-\tilde w_0} - e^{-b\tilde w_1},
\end{cases}
\end{equation*}
with the same $\ga_0,\ga_1$. 
Note that $\ga_1\le \frac2b < \frac2a$.
These solutions exist by section \ref{guli1}.

Next, we claim that
\[
\bar w_0 \le \tilde w_0\ \text{and}\ 
\bar w_1 \le \tilde w_1.
\]
If the first inequality does not hold, then $\max\, (\bar w_0 - \tilde w_0) > 0$.  Let
\[
\bar w_0(z_0) - \tilde w_0(z_0) = \max\, (\bar w_0 - \tilde w_0) > 0.
\]
We may assume that $z_0\ne 0$.  Then the maximum principle implies that
\[
e^{a\bar w_0(z_0)} - e^{\bar w_1(z_0)-\bar w_0(z_0)} \le 
e^{b\tilde w_0(z_0)} - e^{\tilde w_1(z_0)-\tilde w_0(z_0)}.
\]
Note that $0> a \bar w_0(z_0) > a\tilde w_0(z_0) > b \tilde w_0(z_0)$,
which implies that $w_1(z_0)-\tilde w_1(z_0) > 
w_0(z_0)-\tilde w_0(z_0) >0$.   Let
\[
w_1(z_1) - \tilde w_1(z_1) = \max\, (w_1 - \tilde w_1) > 0.
\]
By the maximum principle again, we have
\[
w_0(z_1)-\tilde w_0(z_1) > w_1(z_1)-\tilde w_1(z_1) >
w_0(z_0)-\tilde w_0(z_0),
\]
which is a contradiction.   Therefore $\bar w_0 \le \tilde w_0$.  Similarly we can prove that $\bar w_1 \le \tilde w_1$.  The claim is proved.

Now, $(\bar w_0,\bar w_1)$ is a subsolution of (\ref{guli1pde}), because
$(\bar w_1)_{z\zbar} \ge  \ e^{\bar w_1-\bar w_0} - e^{-b\bar w_1}$, and
$(\tilde w_0,\tilde w_1)$ is a supersolution of (\ref{guli1pde}).
Since $(\bar w_0,\bar w_1)\le (\tilde w_0,\tilde w_1)$, the monotone scheme produces a maximal solution of (\ref{guli1pde}).   Uniqueness of the solution to
(\ref{guli1pde}) follows by integration as before.  

This completes the proof of existence and uniqueness of solutions of 
(\ref{guli1pde}) under condition (\ref{inequalities}).

\section{Stokes data: proof of Theorem B}\label{stokes}

\subsection{Monodromy data}   Given any radial solution of (\ref{ost}) and (\ref{as}) on a neighbourhood $V$ of a point $z_0$,  we have the corresponding monodromy data of
\begin{equation*}
\Psi_\mu =  \left( -\tfrac{1}{\mu^2} xW - \tfrac{1}{\mu} xw_x + xW^t\right) \Psi.
\end{equation*}
This o.d.e.\  has poles of order two at $0$ and $\infty$, so the data consists of collections of Stokes matrices $S^{(0)}_i$, $S^{(\infty)}_i$ at the poles (relating solutions on different Stokes sectors) and a connection matrix $E$ (which relates solutions near $0$ with solutions near $\infty$).   A priori this monodromy data depends on $z\in V$, but it is in fact independent of $z$; it is a \ll conserved quantity\rrr.

For convenience we introduce the new variable
\[
\zeta=\la/z=\mu/x
\]
and rewrite the o.d.e.\  as follows:
\begin{equation}\label{4ode}
\Psi_\ze =  \left( -\tfrac{1}{\ze^2} W - \tfrac{1}{\ze} xw_x + x^2W^t \right) \Psi.
\end{equation}
In this section we shall compute the Stokes data at $\zeta=\infty$, using the approach of \cite{FIKN06}, for the ten cases listed in Table \ref{tableofcases}.   We shall see that each solution referred to in Theorem A  corresponds to a single Stokes matrix at $\infty$, indeed to just two entries $s_1^\R,s_2^\R$ of this Stokes matrix.  In other words, the rest of the monodromy data is uniquely determined by $s_1^\R,s_2^\R$.  For each of the ten cases we shall derive an explicit formula for $s_1^\R,s_2^\R$ in terms of the asymptotic data $\ga,\de$.  

\subsection{Cases 4a, 4b:}

Let $\eta=1/\ze$.  Then (\ref{4ode}) has the form
\[
\Psi_\eta  = \left(  -\tfrac{1}{\eta^2} x^2W^t + O(\tfrac1\eta) \right) \Psi
\]
near $\eta=0$.  For this o.d.e.\ we shall review the formal solutions, the Stokes phenomenon 
(i.e.\ the relation between formal solutions and holomorphic solutions), 
then give the definition and computation of the Stokes matrices.   Using this we give the proof of Theorem B.  The simplicity of this calculation depends on the fact that the Stokes matrices have a very special form as a consequence of the three symmetries of the equation.

\no{\bf Step 1: Formal solutions.}

In order to apply section 1.4 of \cite{FIKN06}, we are required to diagonalize the leading term of the coefficient matrix.
To do this we observe that $W=e^{-w} \, \Pi \, e^w$ where
\[
e^w=\diag(e^{w_0},e^{w_1},e^{w_2},e^{w_3}),
\ \  
\Pi=
\bp  & 1 & & \\
 & & 1 & \\
  & & & 1\\
1   & & &
   \ep
\]
and  $\Pi= \Om \, d_4\,  \Om^{-1}$ where
\[
\Om=
\bp
1 & 1& 1 & 1\\
1 & \om & \om^2 & \om^3 \\
1 & \om^2 & \om^4 & \om^6 \\
1 & \om^3 & \om^6 & \om^9 
\ep,
\ \ 
d_4=\diag(1,\om,\om^2,\om^3)
\]
(the columns of $\Om$ are the eigenvectors of $\Pi$ with eigenvalues 
$1,\om,\om^2,\om^3$, where $\om = e^{{2\pi \i}/4}$). This gives
\[
W^t = P \, d_4\,  P^{-1},  
\ \ 
P = e^w \Om^{-1}.
\]
By Proposition 1.1 of \cite{FIKN06},  it follows that there exists a unique formal solution  of (\ref{4ode}) 
of the form
\[
\Psi_f = P\left( I + \sum_{k\ge 1} \psi_k \eta^k \right) e^{
\La_0\log\eta + \frac1\eta \frac{\La_{-1}}{-1}
}
\]
where $\La_{-1}= -x^2 d_4$.  Substitution in the o.d.e.\ shows that $\La_0=0$.

The three symmetries of $\al=(\Psi^t)^{-1}d\Psi^t$ (end of section \ref{first})  translate into the following symmetries of $f=\Psi_\ze \Psi^{-1}$:

\no{\em Cyclic symmetry: }  $d_4^{-1} f(\om\ze) d_4 = \om^{-1} f(\ze)$

\no{\em Anti-symmetry: }   $\De f(-\ze)^t \De = f(\ze)$

\no{\em Reality: }   $\De 
\overline{
f 
\left(
\tfrac{1}{x^2\bar\ze  \vphantom{ T^{T^T} }  }
\right) 
}
\De = -x^2 \ze^2 f(\ze)$  

We shall not need this \ll loop group reality\rr condition explicitly (later we shall use it implicitly, in the proof of Proposition \ref{reduction}).  However we shall need  the
following more elementary (and easily verified) property, which we take as the reality condition from now on:

\no{\em Reality: }   $\overline{  f(\bar\ze) } = f(\ze)$.

\no These lead to the following symmetries of the formal solution $\Psi_f$.
    
\begin{lemma}\label{formalsymmetries}   \ \ \ \ \ 

\no{Cyclic symmetry: }  $d_4^{-1} \,\Psi_f(\om\ze)\,\Pi=\Psi_f(\ze)$

\no{Anti-symmetry: }  $\De\, \Psi_f(-\ze)^{-t}\,(\Om\De\Om)^{-1}=\Psi_f(\ze)$

\no{Reality: }  $\overline{\Psi_f(\bar\ze)}\ \Om \,\bar\Om^{-1} = \Psi_f(\ze)$
\end{lemma}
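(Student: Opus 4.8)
The plan is to deduce each of the three symmetries of $\Psi_f$ from the corresponding symmetry of $f = \Psi_\ze \Psi^{-1}$ together with the \emph{uniqueness} of the formal solution furnished by Proposition 1.1 of \cite{FIKN06}. The common mechanism is this: given a symmetry of $f$, one produces from $\Psi_f$ a \emph{new} formal solution of (\ref{4ode}) of exactly the same normalized shape $P(I + \sum_{k\ge1}\psi_k\eta^k)e^{\frac1\eta \La_{-1}/(-1)}$ (with $\La_0 = 0$); by uniqueness this new formal solution must equal $\Psi_f$, which is precisely the asserted identity. So for each symmetry I would (i) apply the transformation dictated by the symmetry of $f$ to the o.d.e.\ to check that the transformed $\Psi_f$ again solves (\ref{4ode}), (ii) check that the transformed object has the correct leading factor $P$ and the correct exponential/logarithmic part, and (iii) invoke uniqueness.

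Concretely, for the \textbf{cyclic symmetry} I would start from $d_4^{-1} f(\om\ze) d_4 = \om^{-1} f(\ze)$. Writing $\zeta\mapsto\om\ze$ in (\ref{4ode}) and conjugating by $d_4$, one sees that $\ze\mapsto d_4^{-1}\Psi_f(\om\ze)$ satisfies the same equation. Its leading coefficient is $d_4^{-1} P$, and since $P = e^w\Om^{-1}$ and $\Pi = \Om d_4 \Om^{-1}$ one has $d_4^{-1} P \cdot \Pi = d_4^{-1} e^w \Om^{-1}\Pi$; using $\Om^{-1}\Pi = d_4^{-1}\Om^{-1}$ (equivalently $\Pi\Om = \Om d_4$) this collapses to $e^w\Om^{-1} = P$. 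One also checks the exponential part is preserved (here $\om$ is a $4$th root of unity and $\La_{-1} = -x^2 d_4$ commutes appropriately, while $\log(\om\ze) = \log\ze + \text{const}$ is absorbed since $\La_0 = 0$). Hence $d_4^{-1}\Psi_f(\om\ze)\Pi$ is a normalized formal solution and equals $\Psi_f(\ze)$. For the \textbf{anti-symmetry}, starting from $\De f(-\ze)^t\De = f(\ze)$, I would check that $\ze\mapsto\De\,\Psi_f(-\ze)^{-t}$ solves (\ref{4ode}) (the transpose-inverse turns the coefficient matrix into $-($its transpose$)$, and conjugation by $\De$ together with $\ze\mapsto-\ze$ restores it), with leading term that must be matched against $P$ by a right multiplication by $(\Om\De\Om)^{-1}$; the exponential part requires $\De d_4^{-1}\De = -d_4 \cdot(\text{adjustment})$, i.e.\ the combinatorics of $\De = J_4$ acting on $d_4 = \diag(1,\om,\om^2,\om^3)$, which is the one place a short explicit computation is unavoidable. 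For the \textbf{reality} symmetry, from $\overline{f(\bar\ze)} = f(\ze)$ one gets that $\ze\mapsto\overline{\Psi_f(\bar\ze)}$ solves (\ref{4ode}) (all coefficients $W, xw_x, x^2 W^t$ are real because the $w_i$ are real), its leading term is $\bar P = e^w\bar\Om^{-1}$, which is brought back to $P$ by right multiplication by $\Om\bar\Om^{-1}$, and the exponential part is real; uniqueness finishes it.

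The main obstacle is bookkeeping rather than conceptual: one must verify that in each case the \emph{right} multiplier ($\Pi$, $(\Om\De\Om)^{-1}$, $\Om\bar\Om^{-1}$) is exactly what is needed to renormalize the leading coefficient to $P$ \emph{and} is compatible with (commutes suitably past) the exponential factor $e^{\frac1\eta \La_{-1}/(-1)}$, so that the transformed object is genuinely of the normalized form to which the uniqueness statement of Proposition 1.1 of \cite{FIKN06} applies. This is where the identities $\Pi = \Om d_4\Om^{-1}$, $W^t = P d_4 P^{-1}$, and the explicit structure of $\De$ and $\Om$ (a Vandermonde-type matrix in $\om$) get used. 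I would also make explicit at the outset that $\log\eta$ behaves correctly under $\ze\mapsto\om\ze$, $\ze\mapsto-\ze$, $\ze\mapsto\bar\ze$ precisely because the substitution $\Psi_f = P(I+\sum\psi_k\eta^k)e^{\La_0\log\eta + \frac1\eta\La_{-1}/(-1)}$ forced $\La_0 = 0$, so there is no branch ambiguity to track — a remark worth stating since it is the reason the three symmetries pass cleanly to $\Psi_f$ without correction terms.
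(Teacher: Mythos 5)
Your argument is essentially the same as the paper's: use the symmetry of $f=\Psi_\ze\Psi^{-1}$ to see that the transformed $\Psi_f$ is again a formal solution, compute the right constant multiplier that renormalizes the leading factor to $P$ and passes cleanly through the exponential factor, and invoke the uniqueness of the normalized formal solution (the paper does this explicitly for the cyclic case and declares the other two similar, exactly as you do). One small slip in your cyclic computation: you state $\Om^{-1}\Pi = d_4^{-1}\Om^{-1}$, but the correct identity (which your own parenthetical $\Pi\Om=\Om d_4$ gives) is $\Om^{-1}\Pi = d_4\Om^{-1}$; with that corrected the step reads $d_4^{-1}P\Pi = d_4^{-1}e^w\Om^{-1}\Pi = d_4^{-1}e^w d_4\Om^{-1} = e^w\Om^{-1} = P$, and the collapse to $P$ is valid.
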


\no Explicitly, the matrices $\Om \,\bar\Om^{-1}, \Om\De\Om$ here are
\[
 \Om \,\bar\Om^{-1} =    
 \bp
 J_1 & \\
  & J_3
\ep,
 \ \ 
 \Om\De\Om=
 \begin{cases}
 4d_4^3 \ \text{in case 4a}\\
 4d_4 \ \text{in case 4b}
 \end{cases}
\]
($J_l$ is defined in section \ref{first}).   

\begin{proof}
We give the proof of the cyclic symmetry formula; the other two are similar.  From the cyclic symmetry of $\Psi_\ze\Psi^{-1}$ we see that  $d_4^{-1}\Psi_f(\om\ze)d_4$
is also a formal solution,  hence must be $\Psi_f(\ze)A$ for some constant matrix $A$.
To find $A$ we examine 
\begin{align*}
d_4^{-1}\Psi_f(\om\ze) &= d_4^{-1} P (I + O(\tfrac1\ze) \,e^{\om\ze x^2 d_4}\\
&= P (I + O(\tfrac1\ze)) P^{-1} d_4^{-1} P  \, e^{\om\ze x^2 d_4}.
\end{align*}
Since $P^{-1} d_4^{-1} P=\Om d_4^{-1} \Om^{-1} = \Pi^{-1}$ and
$\Pi^{-1}\om d_4 \Pi=d_4$,  we obtain
\[
d_4^{-1}\Psi_f(\om\ze) = 
P (I + O(\tfrac1\ze)  \, e^{\ze x^2 d_4}\, \Pi^{-1}.
\]
Thus, $d_4^{-1}\Psi_f(\om\ze)\Pi$ is a formal solution of the same type as $\Psi_f(\ze)$,
so it must be equal to $\Psi_f(\ze)$.  This completes the proof (and shows that
$A=\Pi^{-1}d_4$).
\end{proof}

\no{\bf Step 2: Stokes phenomenon.}

The Stokes phenomenon depends on the fact that (\ref{4ode}) admits a holomorphic solution $\Psi$ on some region of the form
\[
\{ \ze\in\C\cup\infty \st \th_1<\ar\ze < \th_2, \ \vert \ze\vert > R \ \}
\]
with asymptotic expansion $\Psi_f$ whenever $\th_2-\th_1$ is sufficiently small, 
i.e.\  {\em if the sector is sufficiently narrow.}  (This will be made precise in a moment.)  On the other hand,  it is easy to see that $\Psi$ --- when it exists --- is unique {\em if the sector is sufficiently wide.}   Namely, if $\Psi,\Psi^\pr$ are two such solutions, then $\Psi^\pr=\Psi C$ for some constant $C$, and we have 
\[
C=\lim_{\ze \to \infty} \Psi^{-1}\Psi^\pr =
\lim_{\ze \to \infty} e^{-\ze x^2 d_4}
(I + O(\tfrac1\ze)) e^{\ze x^2 d_4}.
\]
The desired conclusion $C=I$ would follow if,
for any $i\ne j$,
there exists a path $\ze_t\to\infty$ in the sector such that
$
(C_{ij}=)\ 
\lim_{\ze_t \to \infty} e^{\ze_t  x^2 (\om^j - \om^i)} 
O(\tfrac1{\ze_t}) = 0.
$
Since
$\vert e^{\ze_t  x^2 (\om^j - \om^i)} \vert =
e^{\Re \, \ze_t  x^2 (\om^j - \om^i)}$,  a sufficient condition
is that the path satisfies $\Re \, \ze_t   (\om^j - \om^i) < 0$.     If
the sector contains a ray $\ar\ze = \th$ with 
$\Re \, \ze   (\om^j - \om^i) = 0$, then $\Re \, \ze   (\om^j - \om^i)$ can be made positive or negative within the sector, so we have both $C_{ij}=0$ and $C_{ji}=0$.  
Such a ray is called a Stokes ray.  Stokes rays are said to have the same type if their arguments differ by $\pi$. We conclude that $C=I$ if  the sector contains {\em at least} one Stokes ray of each type.  

It is a nontrivial fact that a solution $\Psi$ exists on the sector if the sector contains {\em at most} one Stokes ray of each type  (this is the meaning of \ll sufficiently narrow\rrr).   
A sector which contains exactly one Stokes ray of each type is called a Stokes sector.  Thus we have the fundamental principle (Theorem 1.4 of \cite{FIKN06}) that there is a unique solution $\Psi$ with asymptotic expansion $\Psi_f$ on any Stokes sector.  

\no{\bf Step 3: Definition of Stokes matrices.}

Explicitly, the condition $\Re \, \ze   (\om^j - \om^i) = 0$ means that
$\cos( \ar \ze  + \ar (\om^j - \om^i) )= 0$, i.e.\
$\ar\ze + \ar(\om^j-\om^i)\in \frac\pi 2 + \pi\Z$, $0\le i<j \le 3$, and this simply means $\ar \ze \in  \frac\pi 4\Z$.   
Thus there are 8 Stokes rays;  2 of each of 4 types. These are shown in Fig.\ \ref{stokesrays}.
\begin{figure}[h]
\begin{center}
\includegraphics[scale=0.3,trim= 0 150 0 50]{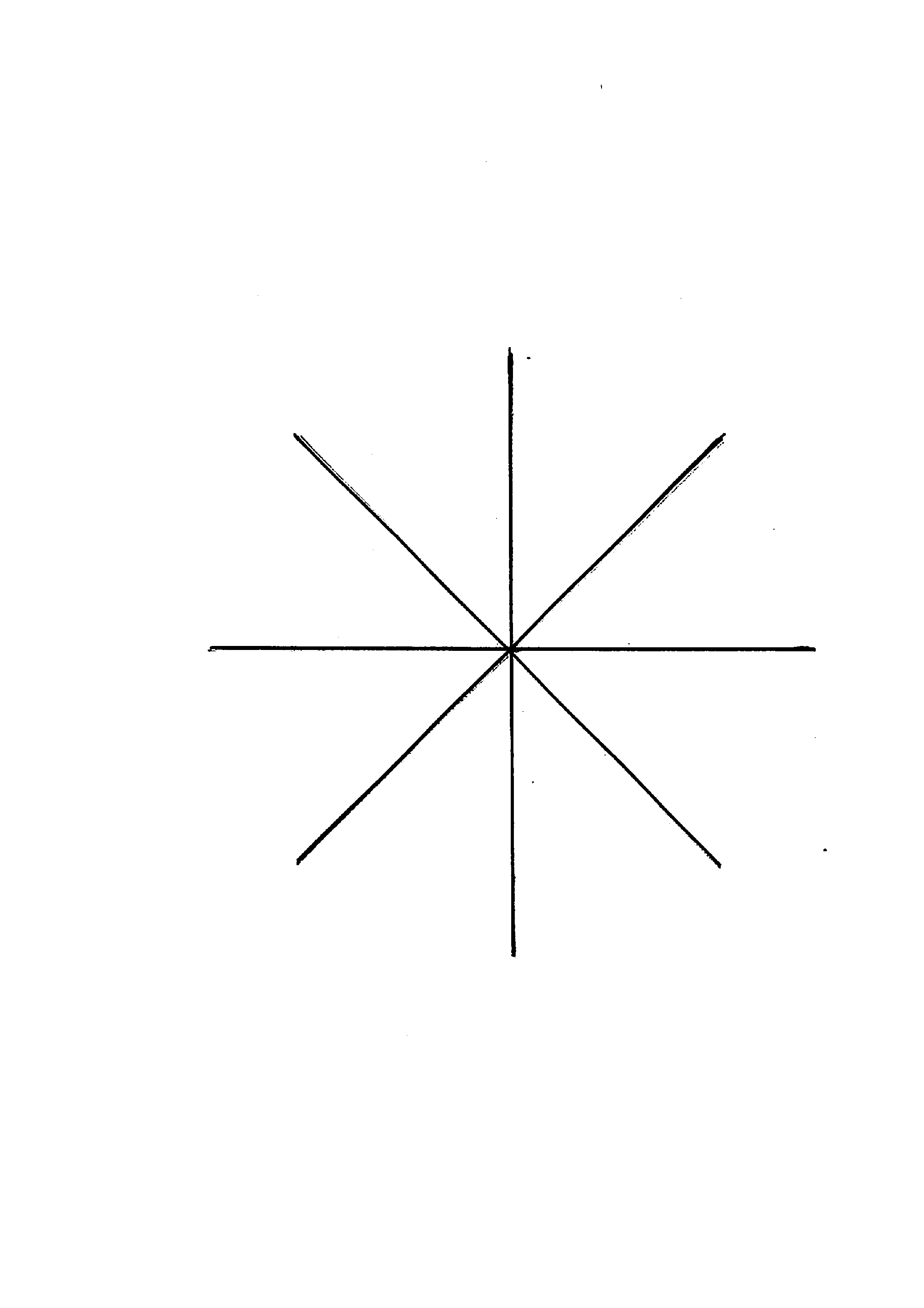}
\end{center}
\caption{Stokes rays.}\label{stokesrays}
\end{figure}
Let us choose (Fig.\ \ref{stokessector}) an initial Stokes sector
\[
\Om_1=\{ \ze\in\C \st -\tfrac\pi2<\ar\ze < \tfrac{3\pi}4\}.
\]
\begin{figure}[h]
\begin{center}
\includegraphics[scale=0.3,trim= 0 100 0 50]{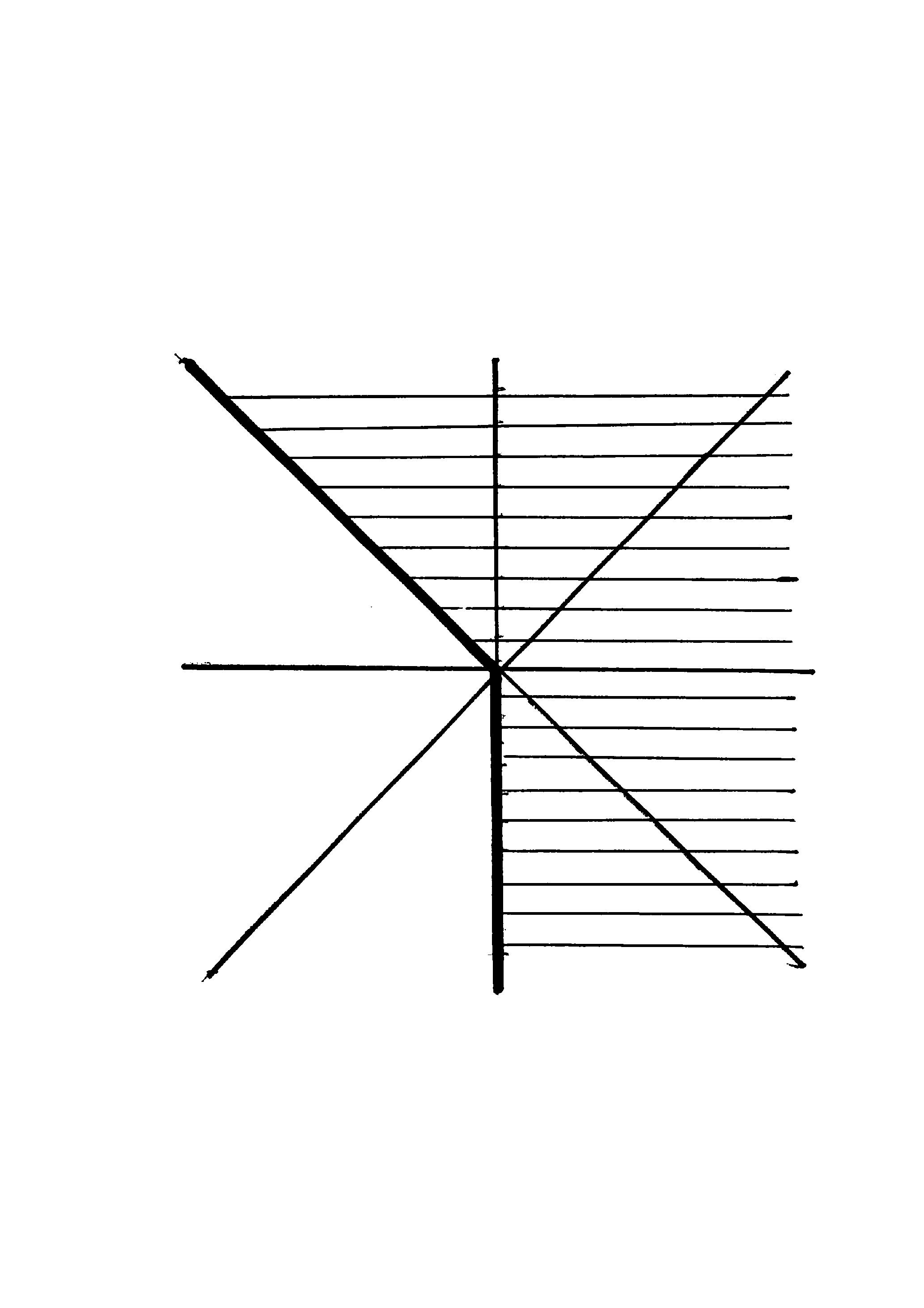}
\end{center}
\caption{Stokes sector $\Om_1$.}\label{stokessector}
\end{figure}
It will be convenient to regard $\Om_1$ as a subset of 
the universal covering surface 
$\tilde\C^\ast$ (rather than $\C^\ast$), so that we can define further subsets $\Om_k$ by
\[
\Om_{k+1}=e^{k\pi \i} \Om_1, \ \ k\in\Z.
\]
The projection of this to $\C^\ast$ is $\{ \ze\in\C^\ast \st -\tfrac\pi2+k\pi<\ar\ze < \tfrac{3\pi}4+k\pi\}$,
which is also a Stokes sector. Hence, for any $k\in\Z$,  we have a unique holomorphic solution $\Psi_k$ such that 
$\Psi_k\sim\Psi_f$ as $\ze\to\infty$ in $\Om_k$. 
Such a solution may be analytically continued to $\tilde\C^\ast$, and we shall use the same notation $\Psi_k$ for the extended solution. 

On the overlap $\Om_{k+1}\cap\Om_k$ (and hence on all of $\tilde\C^\ast$) the two solutions $\Psi_{k+1},\Psi_k$ must be related by a constant matrix. We write
\[
\Psi_{k+1}=\Psi_k S_k
\]
and call $S_k$ the $k$-th Stokes matrix (at $\infty$).   

\begin{lemma}\label{periodic}
$
S_{k+2}=S_k
$
for all $k\in\Z$.  
\end{lemma}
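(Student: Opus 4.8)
The plan is to transport the cyclic symmetry of the formal solution (Lemma \ref{formalsymmetries}) first to the canonical solutions $\Psi_k$ and then to the Stokes matrices $S_k$.  As a first step I would apply the formula $d_4^{-1}\Psi_f(\om\ze)\Pi=\Psi_f(\ze)$ of Lemma \ref{formalsymmetries} twice, once as written and once with $\ze$ replaced by $\om\ze$, to obtain
\[
d_4^{-2}\,\Psi_f(\om^2\ze)\,\Pi^2=\Psi_f(\ze).
\]
Iterating is what makes this useful: in cases 4a, 4b one has $\om=e^{2\pi\i/4}$, hence $\om^2=e^{\pi\i}$, so the rotation $\ze\mapsto\om^2\ze$ carries the Stokes sector $\Om_k$ onto $\Om_{k+1}=e^{k\pi\i}\Om_1$, i.e.\ shifts by exactly one sector, whereas the single symmetry $\ze\mapsto\om\ze$ would only shift by a half-sector and land between the $\Om_k$'s.

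Next I would observe that the operation $\Psi(\ze)\mapsto d_4^{-1}\Psi(\om\ze)$ carries solutions of (\ref{4ode}) to solutions --- this is exactly the coefficient symmetry $d_4^{-1}f(\om\ze)d_4=\om^{-1}f(\ze)$ together with the chain rule --- and that right multiplication by the constant matrix $\Pi$ does likewise; hence $d_4^{-2}\,\Psi_{k+1}(\om^2\ze)\,\Pi^2$ is again a solution of (\ref{4ode}).  For $\ze\in\Om_k$ we have $\om^2\ze\in\Om_{k+1}$, so $\Psi_{k+1}(\om^2\ze)\sim\Psi_f(\om^2\ze)$ as $\ze\to\infty$, and the identity of the first step then yields $d_4^{-2}\Psi_{k+1}(\om^2\ze)\Pi^2\sim\Psi_f(\ze)$ on $\Om_k$.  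By the uniqueness of the solution with asymptotic expansion $\Psi_f$ on a Stokes sector (Theorem 1.4 of \cite{FIKN06}) this forces
\[
\Psi_k(\ze)=d_4^{-2}\,\Psi_{k+1}(\om^2\ze)\,\Pi^2\qquad(k\in\Z),
\]
an identity which extends to all of $\tilde\C^\ast$ by analytic continuation.

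Finally I would feed the last identity, written for both $k$ and $k+1$, into the defining relation $\Psi_{k+1}=\Psi_k S_k$: the factors $d_4^{-2}$ cancel, and using $\Psi_{k+1}^{-1}\Psi_{k+2}=S_{k+1}$ one is left with $S_k=\Pi^{-2}S_{k+1}\Pi^2$.  Applying this once more and invoking $\Pi^4=I$ --- in cases 4a, 4b the matrix $\Pi$ is the $4\times4$ cyclic permutation --- gives $S_k=\Pi^{-4}S_{k+2}\Pi^4=S_{k+2}$.

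I do not expect a real obstacle here; the argument is the slogan ``a symmetry of the equation induces a symmetry of the canonical solutions, hence of the Stokes data.''  The only points needing care are bookkeeping ones: keeping the constant matrices $d_4$ and $\Pi$ in the right places, recognising the one-sector shift $\om^2=e^{\pi\i}$, and checking (routinely, via the chain rule and the coefficient symmetry) that the operations used in the second step genuinely preserve the solution space of (\ref{4ode}).
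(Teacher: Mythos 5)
Your proof is correct, but it takes a different route from the paper's. The paper exploits a simpler and more general observation: since $\La_0=0$, the formal series $\Psi_f$ has single-valued coefficients on $\C^\ast$, so $\Psi_f(e^{2\pi\i}\ze)=\Psi_f(\ze)$ with no constant factor at all. This gives $\Psi_{k+2}(e^{2\pi\i}\ze)=\Psi_k(\ze)$ directly by uniqueness, and then a quick algebraic manipulation yields $S_{k+1}=S_{k-1}$. That argument works for any pole order and any $n$, since it uses nothing beyond the absence of a logarithmic term. You instead invoke the cyclic symmetry of the equation, apply it twice to produce the one-sector shift $\om^2=e^{\pi\i}$, deduce the conjugation relation $S_k=\Pi^{-2}S_{k+1}\Pi^2$, and iterate using $\Pi^4=I$. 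This is correct and in fact anticipates the symmetry statements the paper proves a moment later (Lemmas \ref{psik} and \ref{qk}); your intermediate identity $S_{k+1}=\Pi^2 S_k\Pi^{-2}$ is exactly what the paper uses in the proof of Corollary \ref{charac}. The trade-off is that your route is tied to the special arithmetic of this case ($\om^2=e^{\pi\i}$ relies on $n+1=4$; for odd $n+1$ you would need to apply the cyclic symmetry $n+1$ times to get the full $2\pi$ rotation, which collapses back to the paper's argument), whereas the paper's is a one-line observation that is manifestly uniform across all the cases $n+1=4,5,6$ treated later.
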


\begin{proof}
First, we note that the coefficients of $\Psi_f$ are holomorphic on $\C^\ast$.  
Hence $\Psi_{k+2}(e^{2\pi \i}\ze)$ and $\Psi_k(\ze)$ have the same asymptotic expansion $\Psi_f$ on $\Om_k$. It follows that they must be equal.   Applying this fact twice, we obtain
\[
\Psi_{k+2}(e^{2\pi \i}\ze)=
\begin{cases}
\Psi_{k}(\ze)=\Psi_{k-1}(\ze) S_{k-1}\\
\Psi_{k+1}(e^{2\pi \i}\ze)S_{k+1}=\Psi_{k-1}(\ze)S_{k+1}
\end{cases}
\]
which gives $S_{k+1}=S_{k-1}$, as required.   
\end{proof}

We shall in fact need a collection of Stokes sectors compatible with the symmetries.  For this, we introduce the additional sectors
\[
\Om_{k+1}=e^{k\pi \i} \Om_1, \ \ k\in\tfrac14\Z
\]
i.e.\ we allow $k\in\frac14\Z$ from now on, rather than $k\in\Z$.  Let us write
\[
\Psi_{k+\scriptstyle\frac14} = \Psi_k Q_k,
\]
so that $S_k=Q_{k}Q_{k+\scriptstyle\frac14}Q_{k+\scriptstyle\frac24}
Q_{k+\scriptstyle\frac34}$.

A similar argument to that of Lemma \ref{periodic} --- i.e.\  deducing properties of $\Psi_k$ from properties of $\Psi_f$ --- allows us to obtain the symmetries of $\Psi_k$ from those of $\Psi_f$:

\begin{lemma}\label{psik} \ \ \ \ 

\no{\em Cyclic symmetry: }  $d_4^{-1}\Psi_{k+\frac12}(\om\ze)\Pi=\Psi_k(\ze)$

\no{\em Anti-symmetry: }  $\De \Psi_{k+1}(e^{\i\pi}\ze)^{-t}(\Om\De\Om)^{-1}=\Psi_k(\ze)$

\no{\em Reality: } $\overline{  \Psi_{\frac74-k}(\bar\ze)  } \Om\bar\Om^{-1} = \Psi_k(\ze)$ 
\end{lemma}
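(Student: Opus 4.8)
The plan is to derive the three symmetry relations for the sector solutions $\Psi_k$ directly from the corresponding symmetry relations for the formal solution $\Psi_f$ established in Lemma \ref{formalsymmetries}, exactly along the lines of the proof of Lemma \ref{periodic}. The principle throughout is: if $\Phi$ is a holomorphic solution of (\ref{4ode}) on a Stokes sector $\Om$ and its asymptotic expansion there equals $\Psi_f$ (in the appropriate coordinate), then by the uniqueness part of Theorem 1.4 of \cite{FIKN06} (Step 2 above), $\Phi$ must coincide with the canonical solution $\Psi_k$ attached to that sector. So in each case I would take the left-hand side, check that it is a solution of (\ref{4ode}), identify the sector on which it is defined, compute its asymptotic expansion using the symmetry of $\Psi_f$, and match it to the right $\Psi_k$.

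First I would do the cyclic case. Since $f(\ze)=\Psi_\ze\Psi^{-1}$ satisfies $d_4^{-1}f(\om\ze)d_4=\om^{-1}f(\ze)$, the matrix $d_4^{-1}\Psi_{k+\frac12}(\om\ze)\Pi$ is a solution of (\ref{4ode}): conjugating the $\ze$-equation by $d_4^{-1}$ and rescaling $\ze\mapsto\om\ze$ transforms it into itself (the right multiplication by the constant $\Pi$ is harmless). The sector $\Om_{k+\frac12}$ is $e^{(k-\frac12)\pi\i}\Om_1$, and multiplying the argument variable by $\om=e^{\i\pi/2}$ inside, i.e. replacing $\ze$ by $\om\ze$, shifts this back to $\Om_k$; so the left-hand side is a holomorphic solution on $\Om_k$. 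Its asymptotic expansion there is computed exactly as in the proof of Lemma \ref{formalsymmetries}: $d_4^{-1}\Psi_f(\om\ze)\Pi=\Psi_f(\ze)$. Hence the left-hand side is asymptotic to $\Psi_f$ on the Stokes sector $\Om_k$, so by uniqueness it equals $\Psi_k(\ze)$.

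The anti-symmetry case is analogous, using $\De f(-\ze)^t\De=f(\ze)$: the operation $\Phi\mapsto\De\Phi^{-t}(\Om\De\Om)^{-1}$ combined with $\ze\mapsto e^{\i\pi}\ze$ sends solutions of (\ref{4ode}) to solutions (transpose-inverse intertwines $A$ with $-A^t$, and $\De(-A(-\ze))^t\De$ recovers $A(\ze)$ by the $f$-antisymmetry); the sector $\Om_{k+1}=e^{k\pi\i}\Om_1$ is carried to $\Om_k$ by $\ze\mapsto e^{\i\pi}\ze$; and the asymptotics match because $\De\Psi_f(-\ze)^{-t}(\Om\De\Om)^{-1}=\Psi_f(\ze)$ from Lemma \ref{formalsymmetries}. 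For reality, $\overline{f(\bar\ze)}=f(\ze)$ shows $\Phi\mapsto\overline{\Phi(\bar\ze)}\,\Om\bar\Om^{-1}$ takes solutions to solutions; one checks that complex conjugation together with $\ze\mapsto\bar\ze$ maps the sector $\Om_{\frac74-k}$ onto $\Om_k$ (this is where the particular index $\frac74-k$ enters — it is forced by the reflection $\ar\ze\mapsto-\ar\ze$ applied to $\Om_1=\{-\tfrac\pi2<\ar\ze<\tfrac{3\pi}4\}$ together with the half-integer shift conventions); and the asymptotics match by the reality relation for $\Psi_f$. The main obstacle, and the only place real care is needed, is the bookkeeping of sector indices — verifying in each of the three cases that the geometric operation on $\ze$ carries the stated source sector exactly onto $\Om_k$, so that the uniqueness theorem applies on a genuine Stokes sector; the algebra of "solution goes to solution'' and "asymptotics match'' is immediate from Lemma \ref{formalsymmetries} and the definition of $\Psi_k$.
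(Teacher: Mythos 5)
Your proposal is correct and takes essentially the same approach as the paper: use Lemma \ref{formalsymmetries} to show that the left-hand side has asymptotic expansion $\Psi_f$ on the Stokes sector $\Om_k$, then invoke uniqueness, the only nontrivial bookkeeping being the sector identities $\om\Om_k=\Om_{k+\frac12}$, $e^{\i\pi}\Om_k=\Om_{k+1}$, $\bar\Om_k=\Om_{\frac74-k}$ which you correctly identify. The paper's proof is a terse statement of exactly these points.
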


\begin{proof}  The cyclic symmetry 
$d_4^{-1} \,\Psi_f(\om\ze)\,\Pi=\Psi_f(\ze)$ of $\Psi_f$ (Lemma \ref{formalsymmetries})
allows us to deduce that 
$d_4^{-1}\Psi_{k+\frac12}(\om\ze)\Pi=\Psi_k(\ze)$
because the left and right hand sides have the same asymptotic expansion for
$\ze\in\Om_k$ (and $\om\Om_k=\Om_{k+\frac12}$), hence must be equal. Similarly, the anti-symmetry and reality conditions follow from the corresponding properties of $\Psi_f$ and  the formulae
$e^{\i\pi}\Om_k=\Om_{k+1}$, $\bar\Om_k=\Om_{\frac74-k}$.
\end{proof}

\no{\bf Step 4: Computation of Stokes matrices.}

Each $Q_k$ (and hence each $S_k$) has a \ll triangular\rr shape which depends on our chosen diagonalization of the leading term of the o.d.e.:

\begin{lemma}\label{triangular}
(a) The diagonal entries of $Q_k$ are all $1$.  (b) For $i\ne j$, 
the $(i,j)$ entry of $Q_k$ is $0$ if $\ar(\om^i-\om^j) \not\equiv 
\frac{(7-4k)\pi}4$ mod $\pi$.
\end{lemma}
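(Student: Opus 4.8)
The plan is to read off the two claims directly from the definition $\Psi_{k+\frac14}=\Psi_k Q_k$ together with the known triangular structure of Stokes matrices for an o.d.e.\ whose leading term has been diagonalized, exactly as set up in Step 2--3 and in section 1.4 of \cite{FIKN06}. For part (a): on the overlap $\Om_{k+\frac14}\cap\Om_k$ both $\Psi_{k+\frac14}$ and $\Psi_k$ are asymptotic to the same formal solution $\Psi_f=P(I+\sum_{m\ge1}\psi_m\eta^m)e^{\La_{-1}/(-\eta)}$, hence $Q_k=\Psi_k^{-1}\Psi_{k+\frac14}$ satisfies $Q_k=\lim_{\ze\to\infty}e^{-\ze x^2 d_4}(I+O(1/\ze))e^{\ze x^2 d_4}$ along paths in the overlap. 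The diagonal of $e^{-\ze x^2 d_4}(I+O(1/\ze))e^{\ze x^2 d_4}$ is just $I+O(1/\ze)$ (the conjugation does not touch the diagonal), so taking the limit gives $(Q_k)_{ii}=1$.

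For part (b): the $(i,j)$ entry of $e^{-\ze x^2 d_4}(\,\cdot\,)e^{\ze x^2 d_4}$ carries the factor $e^{\ze x^2(\om^j-\om^i)}$ times $O(1/\ze)$. As in the uniqueness discussion of Step 2, this tends to $0$ along a path $\ze\to\infty$ precisely when $\Re\,\ze(\om^j-\om^i)<0$ along that path. A Stokes sector $\Om_{k+\frac14}\cap\Om_k$ straddles exactly one Stokes ray, namely (by the choice $\Om_1=\{-\pi/2<\ar\ze<3\pi/4\}$ and $\Om_{k+1}=e^{k\pi\i}\Om_1$) the ray $\ar\ze=\tfrac{3\pi}{4}+(k-\tfrac34)\pi$ lying in its interior; I should record this argument as $\tfrac{3\pi}{4}-\tfrac{3\pi}{4}+k\pi=k\pi$ once the bookkeeping of the quarter-steps is sorted out, but in any case the upshot is that the overlap lies strictly to one side of all Stokes rays except the one determined by $k$. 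For the pair $(i,j)$, the relevant Stokes ray is where $\Re\,\ze(\om^j-\om^i)=0$, i.e.\ $\ar\ze+\ar(\om^j-\om^i)\in\tfrac\pi2+\pi\Z$. Comparing: if this ray is \emph{not} the one inside the overlap, then throughout the overlap $\Re\,\ze(\om^j-\om^i)$ has a fixed sign. When that sign is negative, the limit forces $(Q_k)_{ij}=0$; when it is positive, the same computation applied to $Q_k^{-1}$ (equivalently, to the pair $(j,i)$) shows the opposite off-diagonal entry vanishes, but one checks that for distinct $i,j$ with $\om^i,\om^j$ fourth roots of unity the two conditions $\Re\,\ze(\om^j-\om^i)$ having fixed sign on the overlap are governed by the single line through $0$ perpendicular to $\om^j-\om^i$, so the clean statement is: $(Q_k)_{ij}=0$ unless the line $\Re\,\ze(\om^j-\om^i)=0$ passes through the overlap, which is the condition $\ar(\om^i-\om^j)\equiv\tfrac{(7-4k)\pi}{4}\pmod\pi$. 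Here the constant $\tfrac{7}{4}\pi$ (equivalently $-\tfrac{\pi}{4}$) comes from the left boundary $\ar\ze=-\pi/2$ of $\Om_1$ shifted appropriately; I would verify the precise offset by testing $k=\tfrac34$, where $\Om_{k+\frac14}\cap\Om_k$ contains the single ray $\ar\ze=-\pi/2$.

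The main obstacle is purely bookkeeping: pinning down exactly which Stokes ray sits inside $\Om_{k+\frac14}\cap\Om_k$ and translating \ll$\Re\,\ze(\om^j-\om^i)=0$ on that ray\rr into the stated congruence $\ar(\om^i-\om^j)\equiv\tfrac{(7-4k)\pi}{4}$, including getting the constant $7$ and the coefficient $4$ right for the quarter-index convention $k\in\tfrac14\Z$. There is no analytic difficulty beyond what Step 2 already supplies (existence and uniqueness of $\Psi_k$ on Stokes sectors, Theorem 1.4 of \cite{FIKN06}); the content of the lemma is that the matrix relating two such solutions has the exponential factor $e^{\ze x^2(\om^j-\om^i)}$ controlling each entry, and that factor decays on the overlap for all off-diagonal positions except those aligned with the one Stokes ray it contains. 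I would present the $k=\tfrac34$ case explicitly as the base computation and then note that the general $k$ follows by the rotation $\Om_{k+1}=e^{k\pi\i}\Om_1$, which shifts arguments by $\pi$ and hence shifts the congruence class by $4\cdot 1\cdot\tfrac\pi4=\pi$, consistent with the \ll mod $\pi$\rr in the statement.
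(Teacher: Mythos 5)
Part (a) of your proposal matches the paper's argument exactly and is correct.

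Part (b) identifies the right computational ingredient (the factor $e^{\zeta x^2(\om^j-\om^i)}$ and the criterion $\Re\,\ze(\om^j-\om^i)<0$ for vanishing), but your geometric picture is inverted in a way that would not survive a careful check. The overlap $\Om_k\cap\Om_{k+\frac14}$ is a sector of angle exactly $\pi$, not a Stokes sector. For $k=1$ it is $\{-\pi/4<\ar\ze<3\pi/4\}$, which contains \emph{three} Stokes rays ($\ar\ze=0,\,\pi/4,\,\pi/2$) in its interior; the fourth Stokes line (argument $3\pi/4$ mod $\pi$) is its boundary. Your ``clean statement'' --- ``$(Q_k)_{ij}=0$ unless the line $\Re\,\ze(\om^j-\om^i)=0$ passes through the overlap'' --- is backwards: when that line passes through the \emph{interior} of the overlap, the overlap contains directions with $\Re\,\ze(\om^j-\om^i)<0$, which forces $(Q_k)_{ij}=0$. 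What the paper actually argues is the opposite: since the overlap is a $\pi$-sector, $(Q_k)_{ij}$ can be nonzero only when $(\om^j-\om^i)(\Om_k\cap\Om_{k+\frac14})$ is exactly the open right half-plane, which is equivalent to the $(i,j)$ Stokes line being the \emph{boundary} of the overlap (with the overlap on the favorable side). Working out the boundary argument $-\tfrac{\pi}{4}+(k-1)\pi$ and translating gives $\ar(\om^i-\om^j)\equiv\tfrac{(7-4k)\pi}{4}$ mod $2\pi$, which a fortiori holds mod $\pi$ as in the lemma. Your proposed base check at $k=\tfrac34$ is also off: $\Om_{3/4}\cap\Om_1=\{-\pi/2<\ar\ze<\pi/2\}$, so $\ar\ze=-\pi/2$ is the boundary ray, not an interior ray. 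The final congruence you write down is correct, but the intermediate reasoning that purports to derive it needs to be replaced by the ``overlap maps to the right half-plane'' argument.
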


\begin{proof}  This is similar to the proof of uniqueness of $\Psi_k$.   Namely,
we have
\[
Q_k=\lim_{\ze \to \infty} \Psi_k^{-1}\Psi_{k+\scriptstyle\frac14} =
\lim_{\ze \to \infty} e^{-\ze x^2 d_4}
(I + O(\tfrac1\ze)) e^{\ze x^2 d_4}.
\]
Hence $(Q_k)_{ii}=1$ for all $i$.  Moreover, for each $(i,j)$ such that $i\ne j$,   
$(Q_k)_{ij}=0$ if there exists a path $\ze_t\to\infty$ in $\Om_k\cap\Om_{k+\scriptstyle\frac14}$ such that
$\Re \, \ze_t   (\om^j - \om^i) < 0$.    Since $\Om_k\cap\Om_{k+\scriptstyle\frac14}$
is a sector of angle $\pi$, 
$(Q_k)_{ij}$
can be nonzero  only if 
$(\om^j-\om^i)\, \Om_k\cap\Om_{k+\scriptstyle\frac14}$ is equal to the right
hand half plane $\Re\ze>0$.  This is equivalent to the condition
$\ar(\om^i-\om^j) \equiv  \frac{(7-4k)\pi}4$ mod $\pi$.
\end{proof}

Let $a_{ij}=\ar(\om^i-\om^j)$.  Then the matrix $(a_{ij})_{0\le i,j\le 3}$ is
\[
\bp
\vphantom{\frac{5^5_5}{4^4_4}}
0 &-\tfrac{\pi}{4} & 0 & \tfrac{\pi}{4}\\
\vphantom{\frac{5^5_5}{4^4_4}}
\tfrac{3\pi}{4} & 0 & \tfrac{\pi}{4} & \tfrac{2\pi}{4}\\
\vphantom{\frac{5^5_5}{4^4_4}}
\tfrac{4\pi}{4} & \tfrac{5\pi}{4} & 0 & \tfrac{3\pi}{4}\\
\vphantom{\frac{5^5_5}{4^4_4}}
\tfrac{5\pi}{4} & \tfrac{6\pi}{4} & \tfrac{7\pi}{4} & 0
\ep.
\]
It follows from this and the lemma that 
\[
Q_1=
\bp
1 & & & \\
* & 1 & & \\
 & & 1 & * \\
  & & & 1
\ep,
\ \ 
Q_{1\scriptstyle\frac14}=
\bp
1 & & & \\
 & 1 & & *\\
 & & 1 &  \\
  & & & 1
\ep.
\]
We shall only need 
$Q_1, Q_{1\scriptstyle\frac14}$, because of the following identities.

\begin{lemma}\label{qk}   We have:

\no{Cyclic symmetry: }  $Q_{k+\scriptstyle\frac12} = \Pi\,  Q_k \, \Pi^{-1}$

\no{Anti-symmetry: }  $Q_{k+1} = \Om\De\Om \, Q_k^{-t}\,  (\Om\De\Om)^{-1} =
\begin{cases}
d_4^3 \, Q_k^{-t}\, d_4^{-3}\ \text{in case 4a}\\
d_4 \, Q_k^{-t}\, d_4^{-1}\ \text{in case 4b}
\end{cases}$

\no{Reality: }  $Q_k=    
\Om \,\bar\Om^{-1}\,
\bar Q_{ {\scriptstyle\frac32} - k}^{-1}\,
(\Om \,\bar\Om^{-1})^{-1}$
\end{lemma}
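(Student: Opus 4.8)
The plan is to obtain each of the three identities by feeding the corresponding symmetry of $\Psi_k$ from Lemma \ref{psik} into the defining relation $\Psi_{k+\frac14}=\Psi_kQ_k$, written for two consecutive values of the index, and then cancelling the invertible (but $\ze$-dependent) fundamental solutions. No asymptotic analysis is needed here, since that work is already contained in Lemma \ref{psik}; what remains is linear algebra. The only auxiliary facts required are the elementary ones recorded just before Lemma \ref{formalsymmetries}: $\Om$ and $\De$ are symmetric, hence $\Om\De\Om$ is symmetric, and $\Om\bar\Om^{-1}=\diag(J_1,J_3)$ squares to the identity; moreover the scalar in $\Om\De\Om=4d_4^3$ (case 4a) or $4d_4$ (case 4b) drops out of any conjugation, which is what produces the two explicit variants in the anti-symmetry formula.

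For the cyclic identity I would write Lemma \ref{psik} as $\Psi_j(\ze)=d_4^{-1}\Psi_{j+\frac12}(\om\ze)\Pi$ and apply it for $j=k$ and $j=k+\frac14$. Substituting both into $\Psi_{k+\frac14}(\ze)=\Psi_k(\ze)Q_k$ and cancelling the left factor $d_4^{-1}$ and the right factor $\Pi$ gives $\Psi_{k+\frac34}(\om\ze)=\Psi_{k+\frac12}(\om\ze)\,\Pi Q_k\Pi^{-1}$; since by definition $\Psi_{k+\frac34}=\Psi_{k+\frac12}Q_{k+\frac12}$, cancelling the invertible factor $\Psi_{k+\frac12}(\om\ze)$ yields $Q_{k+\frac12}=\Pi Q_k\Pi^{-1}$.

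For the anti-symmetry identity I would set $N=\Om\De\Om$ and write Lemma \ref{psik} as $\Psi_j(\ze)=\De\,\Psi_{j+1}(e^{\i\pi}\ze)^{-t}N^{-1}$, again for $j=k$ and $j=k+\frac14$. Putting these into $\Psi_{k+\frac14}=\Psi_kQ_k$, cancelling $\De$ on the left and then right-multiplying by $N$, one gets $\Psi_{k+\frac54}(e^{\i\pi}\ze)^{-t}=\Psi_{k+1}(e^{\i\pi}\ze)^{-t}\,N^{-1}Q_kN$. Transposing and then inverting this identity — and here it is essential to keep the order of factors straight, using $(AB)^{-t}=A^{-t}B^{-t}$ and $N^t=N$ — gives $\Psi_{k+\frac54}(e^{\i\pi}\ze)=\Psi_{k+1}(e^{\i\pi}\ze)\,NQ_k^{-t}N^{-1}$; comparison with $\Psi_{k+\frac54}=\Psi_{k+1}Q_{k+1}$ and cancellation of $\Psi_{k+1}(e^{\i\pi}\ze)$ yields $Q_{k+1}=NQ_k^{-t}N^{-1}$, which is the claimed formula, and substituting $N=4d_4^3$ or $4d_4$ gives the two case-by-case versions. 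For the reality identity I would set $R=\Om\bar\Om^{-1}$ and write Lemma \ref{psik} as $\Psi_j(\ze)=\overline{\Psi_{\frac74-j}(\bar\ze)}\,R$, applying it for $j=k$ and $j=k+\frac14$ (so that the shifted indices are $\frac74-k$ and $\frac32-k$); one also uses $\Psi_{\frac32-k}=\Psi_{\frac74-k}Q_{\frac32-k}^{-1}$, which is just the definition of $Q_{\frac32-k}$. Substituting into $\Psi_{k+\frac14}=\Psi_kQ_k$ and cancelling $\overline{\Psi_{\frac74-k}(\bar\ze)}$ gives $\bar Q_{\frac32-k}^{-1}R=RQ_k$, i.e.\ $Q_k=R^{-1}\bar Q_{\frac32-k}^{-1}R$, which equals the stated $\Om\bar\Om^{-1}\,\bar Q_{\frac32-k}^{-1}\,(\Om\bar\Om^{-1})^{-1}$ because $R^2=\diag(J_1^2,J_3^2)=I$.

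No step is genuinely difficult: the proof is a string of substitutions, cancellations, and one transpose-and-invert. The main obstacle, such as it is, is purely bookkeeping — matching the sector indices correctly under the three shifts $k\mapsto k+\frac12$, $k\mapsto k+1$, $k\mapsto\frac74-k$ of Lemma \ref{psik}, and tracking the order of factors when transposing products in the anti-symmetry case, since an error there would leave a spurious $\ze$-dependent term in place of a constant matrix. Once the indices are aligned, cancelling the invertible fundamental solutions is immediate and the passage to the explicit matrix forms is a one-line substitution.
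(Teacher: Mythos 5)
Your proposal is correct and is exactly the intended argument: the paper's one-line proof (``These follow immediately from Lemma \ref{psik} and the definition of $Q_k$'') is what you have expanded, by applying each symmetry of Lemma \ref{psik} at indices $k$ and $k+\frac14$, substituting into $\Psi_{k+\frac14}=\Psi_kQ_k$, and cancelling the invertible fundamental solution. The transpose-and-invert step in the anti-symmetry case and the use of $(\Om\bar\Om^{-1})^2=I$ in the reality case are both carried out correctly.
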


\begin{proof}
These follow immediately from Lemma \ref{psik} and the definition of $Q_k$.
\end{proof}

\begin{proposition}\label{qk2}  We have
\[
Q_1=
\bp
1 & & & \\
s_1 & 1 & & \\
 & & 1 & -\bar s_1 \\
  & & & 1
\ep,
\ \ 
Q_{1\scriptstyle\frac14}=
\bp
1 & & & \\
 & 1 & & s_2\\
 & & 1 &  \\
  & & & 1
\ep
\]
where $s_1,s_2\in\C$.   Furthermore, 
\[
s_1\in
\begin{cases}
\om^{\scriptstyle\frac32}\R\ \text{in case 4a}\\
\om^{\scriptstyle\frac12}\R\ \text{in case 4b}
\end{cases}
\ \ 
s_2\in
\begin{cases}
\om^{3}\R\ \text{in case 4a}\\
\om\R\ \text{in case 4b}
\end{cases}
\]
so we can define real numbers  $s_1^{\R},s_2^{\R}$ by
\[
\text{$s_1=\om^{\scriptstyle\frac32} s_1^{\R}$, 
$s_2=\om^{3} s_2^{\R}$
in case 4a,}
\]
and
\[
\text{$s_1=\om^{\scriptstyle\frac12} s_1^{\R}$, 
$s_2=\om s_2^{\R}$
in case 4b.}
\]
\end{proposition}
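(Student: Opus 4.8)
\emph{Proof plan.} From the displays preceding Lemma~\ref{qk} we already know the shapes of $Q_1$ and $Q_{1\frac14}$, so the only a priori undetermined data are the two starred entries of $Q_1$ --- call them $s_1$ (below the diagonal) and $t_1$ (above) --- and the single starred entry $s_2$ of $Q_{1\frac14}$. The plan is to feed the identities of Lemma~\ref{qk} back into themselves. The cyclic identity $Q_{k+\frac12}=\Pi\,Q_k\,\Pi^{-1}$, together with $\Pi^4=I$, expresses every $Q_k$ with $k\in\tfrac14\Z$ through $Q_1$ and $Q_{1\frac14}$; in particular $Q_{1/2}=\Pi^{-1}Q_1\Pi$, $Q_{1/4}=\Pi^{-2}Q_{1\frac14}\Pi^{2}$, $Q_2=\Pi^{2}Q_1\Pi^{-2}$, where $\Pi^{\pm2}$ is the symmetric permutation matrix interchanging the first with the third, and the second with the fourth, basis vectors. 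Since every matrix that conjugates a $Q$ in Lemma~\ref{qk} is a symmetric permutation matrix or a diagonal matrix, conjugation carries each matrix unit to a scalar multiple of a matrix unit, so all the checks below are finite bookkeeping.

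\emph{Step 1 (reality).} Applying the reality identity of Lemma~\ref{qk} with $k=1$ and substituting $Q_{1/2}=\Pi^{-1}Q_1\Pi$ (using that $\Pi$ is real) turns it into $Q_1=M\,\bar Q_1^{-1}M$, where $M:=\Om\bar\Om^{-1}\,\Pi^{-1}$ is an involutory symmetric permutation matrix interchanging the two off-diagonal positions of $Q_1$; comparing those entries forces $t_1=-\bar s_1$, so $Q_1$ is determined by $s_1\in\C$ alone. The same argument with $k=1\frac14$, now substituting $Q_{1/4}=\Pi^{-2}Q_{1\frac14}\Pi^{2}$, gives $Q_{1\frac14}=N\,\bar Q_{1\frac14}^{-1}N$ with $N:=\Om\bar\Om^{-1}\,\Pi^{-2}$ fixing the off-diagonal position of $Q_{1\frac14}$, hence $\bar s_2=-s_2$, i.e.\ $s_2$ is purely imaginary. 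Since the imaginary axis equals $\om^{3}\R$ and also $\om\R$, this is exactly the claimed membership of $s_2$ in cases 4a and 4b, and we may write $s_2=\om^{3}s_2^{\R}$ (case 4a), resp.\ $s_2=\om\,s_2^{\R}$ (case 4b), with $s_2^{\R}\in\R$.

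\emph{Step 2 (anti-symmetry).} Applying the anti-symmetry identity of Lemma~\ref{qk} with $k=1$ and substituting $Q_2=\Pi^{2}Q_1\Pi^{-2}$ gives $\Pi^{2}Q_1\Pi^{2}=d_4^{3}Q_1^{-t}d_4^{-3}$ in case 4a and $\Pi^{2}Q_1\Pi^{2}=d_4\,Q_1^{-t}d_4^{-1}$ in case 4b. Expanding both sides in matrix units --- the diagonal conjugation contributes the powers of $\om$ and the conjugation by $\Pi^{2}$ permutes the indices --- and comparing off-diagonal coefficients yields $t_1=-\om\,s_1$ in case 4a and $t_1=-\om^{3}s_1$ in case 4b. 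Combined with $t_1=-\bar s_1$ from Step 1, this gives $\bar s_1=\om\,s_1$ (case 4a), resp.\ $\bar s_1=\om^{3}s_1$ (case 4b). A nonzero $s\in\C$ satisfies $\bar s=\om\,s$ precisely when $s\in\om^{3/2}\R$, and $\bar s=\om^{3}s$ precisely when $s\in\om^{1/2}\R$, which gives the asserted membership of $s_1$; accordingly we put $s_1=\om^{3/2}s_1^{\R}$, resp.\ $s_1=\om^{1/2}s_1^{\R}$, with $s_1^{\R}\in\R$. Substituting the normalized $s_1$, $t_1=-\bar s_1$, and $s_2$ into the known shapes of $Q_1$ and $Q_{1\frac14}$ produces the matrices in the statement.

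\emph{Main difficulty.} There is no analytic obstacle: given Lemmas~\ref{triangular} and~\ref{qk}, this is a finite computation, and the only care needed is (a) choosing, for each symmetry, the value of $k$ for which the identity becomes a self-relation for $Q_1$ or for $Q_{1\frac14}$, and then composing $\Pi^{\pm1},\Pi^{\pm2},\Om\bar\Om^{-1}$ correctly, and (b) bookkeeping the powers of $\om$ produced by conjugation with $d_4^{\pm3}$ or $d_4^{\pm1}$ --- this is where cases 4a and 4b genuinely differ, since $\Om\De\Om$ equals $4d_4^{3}$ in one case and $4d_4$ in the other. Conceptually, the point to keep in mind is that reality alone leaves the argument of $s_1$ free (it only relates $t_1$ to $\bar s_1$) and anti-symmetry alone leaves it free as well (it only relates $t_1$ to $s_1$); only the two together pin $s_1$ to a ray through the origin.
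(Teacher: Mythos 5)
Your proof is correct, and it follows the same route as the paper: read off $t_1=-\bar s_1$ and $s_2+\bar s_2=0$ from the reality identity, and $t_1=-\om s_1$ (case 4a), $t_1=-\om^3 s_1$ (case 4b) from the anti-symmetry identity, then combine. The paper's proof simply states these three relations ($q_{10}+\bar q_{23}=0$, $q_{13}+\bar q_{13}=0$, $q_{23}+\om q_{10}=0$ resp.\ $q_{10}+\om q_{23}=0$) without spelling out the conjugation bookkeeping, which you carry out explicitly and correctly.
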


\begin{proof}  We know that
\[
Q_1=
\bp
1 & & & \\
q_{10} & 1 & & \\
 & & 1 & q_{23} \\
  & & & 1
\ep,
\ \ 
Q_{1\scriptstyle\frac14}=
\bp
1 & & & \\
 & 1 & & q_{13}\\
 & & 1 &  \\
  & & & 1
\ep
\]
for some $q_{10}, q_{23}, q_{13} \in\C$.   The reality condition (at $\infty$) gives $q_{10}+\bar q_{23}=0$, $q_{13}+\bar q_{13}=0$.  
The anti-symmetry condition gives 
$q_{23}+\om q_{10}=0$ in case 4a,  and
$q_{10}+\om q_{23}=0$ in case 4b.
The stated  result follows.
\end{proof}

\begin{corollary}\label{charac}
The monodromy at $\infty$ is  
$(Q_1 Q_{1\scriptstyle\frac14}\Pi)^4$.
The characteristic polynomial of $Q_1 Q_{1\scriptstyle\frac14}\Pi$
is
\[
\la^4 - s_1 \la^3 - s_2 \la^2 + \bar s_1 \la - 1.  
\]
If $\la$ is a root of this polynomial,
then so is $\frac{1}{\om\la}$ in case 4a, and $\frac{1}{\om^3\la}$ in case 4b.
\end{corollary}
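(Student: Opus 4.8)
The plan is to handle the three assertions in order, writing $R := Q_1 Q_{1\frac14}\Pi$ and using freely that $\om = e^{2\pi\i/4}$ (so $\om^2 = -1$ and $\om^4 = 1$) and that $\Pi^4 = I$. For the monodromy at $\infty$: since the formal solution $\Psi_f$ carries no logarithmic term ($\La_0 = 0$, as recorded in Step~1), there is no formal monodromy, so by Lemma~\ref{periodic} the monodromy at $\infty$ is the product $S_1 S_2$ of the two distinct Stokes matrices. Now $S_1 = Q_1 Q_{1\frac14}Q_{1\frac12}Q_{1\frac34}$, and the cyclic identity $Q_{k+\frac12} = \Pi\,Q_k\,\Pi^{-1}$ of Lemma~\ref{qk} lets us rewrite $R^2 = Q_1 Q_{1\frac14}\Pi\,Q_1 Q_{1\frac14}\Pi = Q_1 Q_{1\frac14}Q_{1\frac12}Q_{1\frac34}\Pi^2 = S_1\Pi^2$; applying the same identity factor by factor gives $S_2 = \Pi^2 S_1 \Pi^{-2}$, so that $R^4 = (S_1\Pi^2)^2 = S_1(\Pi^2 S_1 \Pi^{-2})\Pi^4 = S_1 S_2$, which is the first claim.

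For the characteristic polynomial I would simply compute. Multiplying out $Q_1 Q_{1\frac14}$ from Proposition~\ref{qk2} and then right-multiplying by $\Pi$ (which cyclically permutes the columns) produces an explicit $4\times4$ matrix $R$ whose only nonzero entries are a few $1$'s together with $s_1$, $-\bar s_1$ and $s_2$. A Laplace expansion of $\det(\la I - R)$ along its sparse first row then gives $\la^4 - s_1\la^3 - s_2\la^2 + \bar s_1\la - 1$; as a check, the $\la^3$ coefficient is $-\tr R = -s_1$ and the constant term is $\det(-R) = \det R = -1$.

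For the root symmetry, the one nontrivial input is that Proposition~\ref{qk2} constrains $s_1$: since $s_1\in\om^{3/2}\R$ in case 4a (resp.\ $s_1\in\om^{1/2}\R$ in case 4b) and $\bar\om=\om^{-1}$, we get $\bar s_1 = \om^{-3}s_1 = \om s_1$ (resp.\ $\bar s_1 = \om^{-1}s_1 = \om^3 s_1$). Writing $P$ for the characteristic polynomial, a direct substitution using this relation together with $\om^2=-1$ and $\om^4=1$ yields $-\la^4 P\!\left(\tfrac1{\om\la}\right) = P(\la)$ in case 4a and $-\la^4 P\!\left(\tfrac1{\om^3\la}\right) = P(\la)$ in case 4b, so the indicated fractional-linear map carries roots to roots. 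I do not expect a genuine obstacle here: the only delicate points are the orientation convention tucked into the identification ``monodromy $=S_1S_2$'' (a loop around $\infty$ rather than around $0$), which is what pins the statement to $(Q_1 Q_{1\frac14}\Pi)^4$ and not its inverse, and the bookkeeping of half-integer powers of $\om$ when passing from Proposition~\ref{qk2} to the relation $\bar s_1 = \om^{\pm1}s_1$.
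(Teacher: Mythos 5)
Your proof is correct and takes essentially the same route as the paper's: the algebraic rewriting $R^4 = S_1 S_2$ via the cyclic identity $Q_{k+\frac12}=\Pi Q_k \Pi^{-1}$ is just a slightly rearranged version of the paper's computation, the characteristic polynomial is obtained by the same direct expansion from the explicit form of $R$, and the root-symmetry $\la\mapsto \frac{1}{\om\la}$ (resp.\ $\frac{1}{\om^3\la}$) is exactly the anti-symmetry condition of Proposition \ref{qk2} translated into the identity $\bar s_1 = \om s_1$ (resp.\ $\bar s_1 = \om^3 s_1$), which is what the paper summarizes as ``follows from the anti-symmetry condition.'' Your remark that $\La_0=0$ kills the formal monodromy, so that the monodromy at $\infty$ is precisely $S_1S_2$, correctly fills in the one step the paper states without comment.
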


\begin{proof}
The monodromy is (the conjugacy class of) $S_1S_2$.  From the cyclic symmetry we have
$S_1=Q_{1}Q_{1\scriptstyle\frac14}Q_{1\scriptstyle\frac24}
Q_{1\scriptstyle\frac34}=
Q_{1}Q_{1\scriptstyle\frac14}\Pi Q_{1}Q_{1\scriptstyle\frac14}\Pi^{-1}$, and
$S_2=Q_{2}Q_{2\scriptstyle\frac14}Q_{2\scriptstyle\frac24}
Q_{2\scriptstyle\frac34}=
\Pi^2 Q_{1}Q_{1\scriptstyle\frac14} \Pi^{-2} \,
\Pi^3 Q_{1}Q_{1\scriptstyle\frac14} \Pi^{-3}$. 
Since $\Pi^{-3}=\Pi$, we obtain $S_1S_2=(Q_1 Q_{1\scriptstyle\frac14}\Pi)^4$.   The characteristic polynomial of $Q_1 Q_{1\scriptstyle\frac14}\Pi$ may be computed from the formulae in the proposition.  The final statement follows from the anti-symmetry condition.
\end{proof} 

Let us note at this point that our calculations have the following consequence:

\begin{proposition}\label{reduction} The monodromy data $S^{(0)}_i, S^{(\infty)}_i, E$ ($i\in\Z$)
for the linear system (\ref{4ode}) is equivalent
to $s_1^\R,s_2^\R$.
\end{proposition}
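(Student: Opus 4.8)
The plan is to show that every piece of the monodromy data — the Stokes matrices at $0$, the Stokes matrices at $\infty$, and the connection matrix $E$ — is determined by the two real numbers $s_1^\R, s_2^\R$. The computation at $\infty$ has essentially already been done: by Proposition \ref{qk2} and Lemma \ref{qk}, all the $Q_k$ (and hence all the $S_k^{(\infty)}$) are expressible in terms of $s_1, s_2$, which in turn are $\om$-multiples of $s_1^\R, s_2^\R$. So the first step is just to record that the Stokes data at $\infty$ is a function of $(s_1^\R, s_2^\R)$.

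The second step is to transfer this to $\ze = 0$. Here I would invoke the \emph{loop group reality} symmetry $\De\, \overline{f(1/(x^2\bar\ze))}\, \De = -x^2\ze^2 f(\ze)$ stated (but not previously used) at the end of Step 1 of Section 4.2. This symmetry exchanges $\ze = 0$ with $\ze = \infty$, so it identifies the formal solution, Stokes sectors, and Stokes matrices at $0$ with those at $\infty$ up to the explicit conjugations and complex conjugations encoded in the symmetry. Concretely, one shows that $S_i^{(0)}$ is obtained from $S_j^{(\infty)}$ (for an appropriate index shift) by conjugation by a fixed matrix built from $\De$, $\Om$, and $x$, together with entrywise complex conjugation; since the $S_j^{(\infty)}$ depend only on $(s_1^\R,s_2^\R)$ and these are real, so do the $S_i^{(0)}$. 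This is the step I expect to be the main obstacle: one has to set up the reality symmetry carefully enough to see exactly how it acts on Stokes sectors and Stokes matrices, keeping track of the $x$-dependence (which must drop out of the conjugation in the appropriate sense) and of the fact that conjugation by an anti-holomorphic symmetry reverses the cyclic ordering of the sectors.

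The third step is the connection matrix $E$, defined by $\Psi^{(0)} = \Psi^{(\infty)} E$ for a chosen pair of canonical solutions near $0$ and near $\infty$. Again the loop group reality symmetry relates a canonical solution at $0$ to a canonical solution at $\infty$, forcing $E$ to satisfy a reality/symmetry constraint of the form $E = (\text{fixed matrix})\,\bar E^{-1}\,(\text{fixed matrix})$ analogous to the ones for $Q_k$. Combined with the cyclic and anti-symmetry constraints on $E$ and the fact that the monodromy $S_1S_2 = (Q_1Q_{1\frac14}\Pi)^4$ around $\infty$ (Corollary \ref{charac}) together with its counterpart at $0$ and the global relation $(\text{monodromy at }0)\cdot E^{-1}(\text{monodromy at }\infty)E = I$ must hold, one pins down $E$ up to a discrete ambiguity which is resolved by the asymptotic normalization. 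The upshot is that $E$, too, is a function of $(s_1^\R,s_2^\R)$ alone.

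Finally I would assemble these three observations: since $(s_1^\R,s_2^\R)$ determines $S_i^{(\infty)}$ directly, determines $S_i^{(0)}$ via the reality symmetry, and determines $E$ via the combined symmetry and monodromy constraints, the full monodromy data is a function of $(s_1^\R,s_2^\R)$; conversely $(s_1^\R,s_2^\R)$ is visibly read off from $S_1^{(\infty)}$, so the two are equivalent. I would keep the write-up short, citing Proposition \ref{qk2}, Lemma \ref{qk}, and Corollary \ref{charac} for the part at $\infty$, and treating the passage to $0$ and to $E$ by the reality symmetry as the one genuinely new ingredient — the rest being bookkeeping of the same flavour as the lemmas already proved.
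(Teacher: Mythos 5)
Your treatment of the Stokes matrices at $\infty$ and their transfer to $0$ matches the paper's proof: Lemma \ref{periodic} reduces to $S_1, S_2$, Lemma \ref{qk} reduces further to $Q_1, Q_{1\frac14}$, Proposition \ref{qk2} gives $s_1^\R, s_2^\R$, and the $S^{(0)}_k$ are recovered from the $S^{(\infty)}_k$ via the loop-group reality symmetry (the paper states this last transfer in a single sentence and leaves the bookkeeping you flag to the reader).

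Where you diverge — and where I see a genuine gap — is the connection matrix $E$. You propose a direct argument: impose reality, cyclic, and anti-symmetry constraints on $E$, combine them with the relation between the monodromies at $0$ and $\infty$, and conclude that this ``pins down $E$ up to a discrete ambiguity.'' That is not clearly true. The monodromy relation alone determines $E$ only up to right-multiplication by the commutant of the monodromy at $\infty$; for generic distinct eigenvalues (as here) that commutant is a full maximal torus, a continuous family. Whether the symmetry constraints cut this down to a discrete set is precisely the computation the paper declines to carry out, explicitly deferring the determination of $E$ to \cite{GuItLi2XX}. Your sketch asserts the conclusion without the analysis.

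The paper instead argues indirectly, and the shortcut is worth noticing. Proposition \ref{injective}, proved immediately afterwards and depending only on Theorem A together with Proposition 2.2 of \cite{FIKN06} (so the forward reference is not circular), shows that $(s_1^\R, s_2^\R)$ determines $(\ga,\de)$. Theorem A then determines the functions $(u,v)$, hence the entire coefficient matrix of (\ref{4ode}), hence all of its monodromy data, including $E$. This one-line argument completely sidesteps the symmetry analysis of $E$. You should either adopt this indirect argument or actually carry out the constraint analysis you sketch; as written, your step 3 does not establish that $E$ is determined by $s_1^\R, s_2^\R$.
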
 

\begin{proof}  We have shown in Lemma \ref{periodic} that the Stokes matrices 
$S_k=S^{(\infty)}_k$ reduce to $S_1,S_2$.   By Lemma \ref{qk}, they reduce to $S_1$.  By Proposition \ref{qk2} they reduce to
$s_1^\R,s_2^\R$.

By combining the reality and \ll loop group reality\rr conditions, the $S^{(0)}_k$ can be expressed in terms of the $S^{(\infty)}_k$.   

Concerning the connection matrix $E$,  we can argue indirectly that this is also determined by $s_1^\R,s_2^\R$, because  Proposition \ref{injective} 
(below) and Theorem A say that $s_1^\R,s_2^\R$ determine $\ga,\de$ and hence the functions $u,v$.  (We compute $E$ in \cite{GuItLi2XX}.)
\end{proof}

It remains to establish the formulae for $s_1^\R,s_2^\R$ announced in 
section \ref{results}:

\no{\bf Proof of Theorem B(i):}    For any $(\ga,\de)$ in the region
\[
\text{$\ga\ge -1$, $\de\le 1$, $\ga-\de\le 2$,}
\]
Theorem A gives a solution $u,v$ of the system
\[
u_{z \zbar} = e^{2u} - e^{v-u},\ \  v_{z \zbar} =  e^{v-u}  - e^{-2v}.
\]
This corresponds to a solution $w_0,w_1,w_2,w_3$
of the system (\ref{ost}) in cases 4a, 4b of Table \ref{tableofcases}.
Our aim is to prove, in both cases, that
\begin{align*}
\pm s_1^\R &= 2\cos \tfrac\pi4 {\scriptstyle (\ga+1)} \ +\  2\cos \tfrac\pi4 
{\scriptstyle (\de+3)}\\
-s_2^\R &= 2+4\cos \tfrac\pi4 {\scriptstyle (\ga+1)} \ \cos \tfrac\pi4 
{\scriptstyle (\de+3)}.
\end{align*}
We shall do this by computing the (conjugacy class of the) monodromy at $\infty$ in two ways. First, by Corollary \ref{charac}, the monodromy is 
$(Q_1 Q_{1\scriptstyle\frac14}\Pi)^4$. On the other hand, the monodromy can be computed from the o.d.e.\ 
\[
\mu\Psi_\mu =  \left( -\tfrac{1}{\mu} xW - xw_x + x\mu W^t \right)\Psi,
\]
by using the known asymptotic behaviour of $u,v$.  
Namely, by letting $x\to 0$, we obtain the constant coefficient o.d.e.\ 
\[
\mu\Psi_\mu = -\diag(\tfrac12\ga_0,\tfrac12\ga_1,\tfrac12\ga_2,\tfrac12\ga_3)\Psi,
\]
where $(\ga_0,\ga_1,\ga_2,\ga_3)=(\ga,\de,-\de,-\ga)$ in case 4a, and
 $(\ga_0,\ga_1,\ga_2,\ga_3)=(\de,-\de,-\ga,\ga)$ in case 4b.
We deduce that the eigenvalues of $(Q_1 Q_{1\scriptstyle\frac14}\Pi)^4$
are $e^{\scriptstyle \pi\i \ga_0}$,
$e^{\scriptstyle \pi\i \ga_1}$,
$e^{\scriptstyle \pi\i \ga_2}$,
$e^{\scriptstyle \pi\i \ga_3}$.
Hence there exist integers $a,b,c,d$ such that
the eigenvalues of $Q_1 Q_{1\scriptstyle\frac14}\Pi$
are 
\[
\om^a e^{\scriptstyle \frac\pi4\i \ga_0},
\om^b e^{\scriptstyle \frac\pi4\i \ga_1},
\om^c e^{\scriptstyle \frac\pi4\i \ga_2},
\om^d e^{\scriptstyle \frac\pi4\i \ga_3}.
\]
Now, when $\ga_i=0$ for all $i$, we must have $w_i=0$ for all $i$, hence $Q_k=I$ for all $k$, and the eigenvalues of $Q_1 Q_{1\scriptstyle\frac14}\Pi$
are $1,\om,\om^2,\om^3$.  Thus $(a,b,c,d)$ must be a permutation of $(0,1,2,3)$.    To compute $a,b,c,d$ we consider cases 4a, 4b separately.

\no{\em Case 4a:}      

By the last statement of Corollary \ref{charac}, 
$
(a,b,c,d)=(l,m,-m-1,-l-1)
$
for some $l,m\in \Z/4\Z$.   
Using Proposition \ref{qk2}, and the fact that
$s_1,-s_2$ are the first and second elementary symmetric functions of the roots of the characteristic polynomial, we obtain
\begin{align*}
s_1^\R
&=\om^{\scriptstyle-\frac32}  s_1=
e^{  {\scriptscriptstyle -\frac{3\pi \i}{\four}}} s_1\\
&=
e^{  {\scriptscriptstyle \frac{\pi \i}{\four}}(\ga+2l-3)}+
e^{  {\scriptscriptstyle \frac{\pi \i}{\four}}(\de+2m-3)}+
e^{  {\scriptscriptstyle \frac{\pi \i}{\four}}(-\de-2m-5)}+
e^{  {\scriptscriptstyle \frac{\pi \i}{\four}}(-\ga-2l-5)}\\
&=
2\cos{  \tfrac{\pi }{4}  {\scriptstyle (\ga+2l-3)}  }+
2\cos{ \tfrac{\pi }{4}  {\scriptstyle (\de+2m-3)}  }
\end{align*}
and similarly
\begin{align*}
-s_2^\R
&=-\om^{-3}s_2 =-e^{  {\scriptscriptstyle -\frac{6\pi \i}{\four}}} s_2\\
&=2+2\cos{  \tfrac{\pi }{4}  {\scriptstyle (\ga+\de+2l+2m-6)}  }+
2\cos{  \tfrac{\pi }{4} {\scriptstyle (\ga-\de+2l-2m)}  }\\
&=
2+4\cos{ {\tfrac{\pi }{4}}  {\scriptstyle(\ga+2l-3)}  }
\,\cos{ {\tfrac{\pi }{4}}  {\scriptstyle(\de+2m-3)}  }.
\end{align*}
Of the eight values of $(l,m)$ such that
$\{l,m,-m-1,-l-1\}=\{0,1,2,3\}$, 
we shall show that only $(l,m)=(0,1)$ or $(2,3)$ are possible. To eliminate the other six cases $(l,m)=(1,0), (3,2), (0,2), (2,0), (3,1), (1,3)$ we use the following fact:

\begin{proposition}\label{injective}
The map 
$(\ga,\de)\mapsto (s_1^\R,s_2^\R)$ is injective on the region $\ga\ge -1$, $\de\le 1$, $\ga-\de\le 2$.
\end{proposition}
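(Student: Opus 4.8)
The plan is to prove the proposition directly, by trigonometry; the alternative --- invoking the general principle that the full monodromy data is a complete invariant --- is circular here, since the connection matrix $E$ is only shown to be determined by $s_1^\R,s_2^\R$ through this very proposition (see Proposition \ref{reduction}). Put $X=\cos\tfrac\pi4(\ga+1)$ and $Y=\cos\tfrac\pi4(\de+3)$. The formulas of Theorem B(i) say that $s_1^\R=\eps\cdot 2(X+Y)$ for a fixed sign $\eps\in\{+1,-1\}$ (the same for every point of the region) and that $-s_2^\R=2+4XY$. Hence $(s_1^\R,s_2^\R)$ determines both $X+Y$ and $XY$, and therefore the unordered pair $\{X,Y\}$, namely the two roots of $t^2-(X+Y)t+XY$. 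It thus suffices to recover the ordered pair $(X,Y)$ from this data, and from it $(\ga,\de)$.

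First I would pin down the ranges of the arguments on the region $R=\{\ga\ge-1,\ \de\le1,\ \ga-\de\le2\}$. From $\ga\ge-1$ together with $\ga\le\de+2\le3$ one gets $\ga+1\in[0,4]$, and from $\de\le1$ together with $\de\ge\ga-2\ge-3$ one gets $\de+3\in[0,4]$. On $[0,4]$ the map $t\mapsto\cos\tfrac\pi4 t$ is strictly decreasing, hence a bijection onto $[-1,1]$; consequently $X$ determines $\ga$ and $Y$ determines $\de$. Next, the remaining inequality $\ga-\de\le2$ is equivalent to $\ga+1\le\de+3$, hence --- by the strict monotonicity just used --- to $X\ge Y$. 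Therefore on $R$ one always has $X=\max\{X,Y\}$ and $Y=\min\{X,Y\}$, so the ordered pair $(X,Y)$ is recovered from the unordered pair, and with it $(\ga,\de)$. This establishes the proposition.

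The one subtle point is the sign $\eps$: if one knows only $|s_1^\R|$ rather than $s_1^\R$, the argument recovers $XY$ and $|X+Y|$, hence $\{X,Y\}$ only up to the simultaneous change $(X,Y)\mapsto(-X,-Y)$; this change corresponds exactly to the involution $(\ga,\de)\mapsto(-\de,-\ga)$ of $R$ (one checks $\cos\tfrac\pi4(-\de+1)=-Y$ and $\cos\tfrac\pi4(-\ga+3)=-X$), which is why only the unordered pair of admissible index choices $(l,m)$ is pinned down and why the sign of $s_1^\R$ is immaterial for the classification that follows. I expect the real work to be bookkeeping rather than ideas: extracting the two intervals $[0,4]$ correctly from the three inequalities of $R$, so that $\cos\tfrac\pi4$ is genuinely monotone there with no loss at the endpoints; and, in order to use the proposition to discard the six rejected values of $(l,m)$, exhibiting for each of them two distinct points of $R$ with equal image --- which is easy, because for those $(l,m)$ the shifted argument $\ga+2l-3$ (or $\de+2m-3$) sweeps an interval on which $\cos\tfrac\pi4$ is not injective.
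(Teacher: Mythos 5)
Your trigonometric computations are correct as mathematics: \emph{if} one takes the formulas of Theorem B(i) for granted, then $(\ga,\de)\mapsto(s_1^\R,s_2^\R)$ is indeed injective on the region, for exactly the reasons you give (the arguments $\ga+1$ and $\de+3$ range over $[0,4]$, where $t\mapsto\cos\tfrac\pi4 t$ is strictly decreasing, and $\ga-\de\le2$ forces $X\ge Y$, so the unordered pair $\{X,Y\}$ recovers the ordered pair). However, this does not constitute a proof of Proposition \ref{injective}, because it is circular in the logical order of the paper. The formulas of Theorem B(i) are not available until \emph{after} Proposition \ref{injective} has been invoked: in the proof of Theorem B(i), the characteristic polynomial computation only shows that $(s_1^\R,s_2^\R)$ is given by one of eight candidate trigonometric expressions indexed by an unknown $(l,m)$, and the proposition is precisely the tool used to discard six of them and arrive at $(l,m)=(0,1)$ or $(2,3)$. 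Your argument begins from the winning formula as though it were already established, which is to assume the very thing the proposition is needed to prove.

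The paper's own proof is therefore necessarily of a different, formula-free nature: it combines (i) the bijectivity and monotone dependence of $(\ga,\de)\mapsto(u,v)$ from Theorem A with (ii) the injectivity of the isomonodromic Riemann--Hilbert map from Proposition 2.2 of \cite{FIKN06}, with the deformation parameter $x$ restored. That chain gives injectivity of $(\ga,\de)\mapsto(s_1^\R,s_2^\R)$ without any knowledge of the explicit formula, which is what makes it usable at the point where the paper needs it. Your worry that the Riemann--Hilbert route might be circular through the connection matrix $E$ and Proposition \ref{reduction} is a reasonable point to flag, but note that the proof of Proposition \ref{injective} does not invoke Proposition \ref{reduction}; it appeals directly to \cite{FIKN06}, and the paper itself acknowledges the indirectness of its treatment of $E$. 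In short, what you have proved is a consistency check on the final answer (the stated formula defines an injective map), not the proposition; your argument cannot serve the proposition's actual role of selecting $(l,m)$, and you would still need the isomonodromic argument to break the circle.
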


\begin{proof} Theorem A implies that the map $(\ga,\de)\mapsto(u,v)$ is bijective.  Moreover, the proof shows that the dependence of $u$ on $\ga$ (and that of $v$ on $\de$) is monotone, hence
the map $(x,\ga,\de)\mapsto(x,u(x),v(x))$ is bijective as well.   
On the other hand, Proposition 2.2 of \cite{FIKN06} implies that the map
$
-\tfrac{1}{\mu^2} xW - \tfrac{1}{\mu} xw_x + xW^t
\mapsto (x,s_1^\R,s_2^\R)
$  
is injective;  the failure of the Riemann-Hilbert map
$
-\tfrac{1}{\mu^2} xW - \tfrac{1}{\mu} xw_x + xW^t
\mapsto (s_1^\R,s_2^\R)
$ 
to be injective is entirely due to the isomonodromic deformation parameter $x$.
We conclude that 
$(x,\ga,\de)\mapsto (x,u(x),v(x))
\mapsto (x,s_1^\R,s_2^\R)$  is  injective, hence so is
$(\ga,\de)\mapsto (s_1^\R,s_2^\R)$.
\end{proof} 

To examine injectivity of our eight candidates, it suffices to consider the expression
\[
\left(
\cos \tfrac{\pi }{4}  {\scriptstyle(\ga+2l-3)}  +
\cos \tfrac{\pi }{4}  {\scriptstyle(\de+2m-3)}  ,
\cos \tfrac{\pi }{4}  {\scriptstyle(\ga+2l-3)}
\cos \tfrac{\pi }{4}  {\scriptstyle(\de+2m-3)} 
\right).
\]
Evidently the map $(x ,y)\mapsto (\cos f(x) +\cos g(y),\cos f(x) \cos g(y))$ 
fails to be injective on the region $y\ge x $ in $[0,\pi]\times [0,\pi]$
if $x \mapsto \cos f(x) $ fails to be injective on $[0,\pi]$, or 
if $y\mapsto \cos g(y)$ fails to be injective on $[0,\pi]$.   Since $z\mapsto \cos z$ is injective only on the intervals $[n\pi,(n+1)\pi]$,
this criterion eliminates $l=1,3$ and $m=0,2$,  hence all six cases listed above.

On the other hand it is easy to verify that the map is injective in the cases
$(l,m)=(0,1)$ or $(2,3)$.   These give the stated values for $s_1^\R,s_2^\R$.

\no{\em Case 4b:} 

This time Corollary \ref{charac} shows that
$
(a,b,c,d)=(m,-m+1,-l+1,l)
$
for some $l,m\in \Z/4\Z$.   We obtain
\begin{align*}
s_1^\R
&=\om^{\scriptstyle-\frac12}  s_1=
e^{  {\scriptscriptstyle -\frac{\pi \i}{\four}}} s_1\\
&=
e^{  {\scriptscriptstyle \frac{\pi \i}{\four}}(\de+2m-1)}+
e^{  {\scriptscriptstyle \frac{\pi \i}{\four}}(-\de-2m+1)}+
e^{  {\scriptscriptstyle \frac{\pi \i}{\four}}(-\ga-2l+1)}+
e^{  {\scriptscriptstyle \frac{\pi \i}{\four}}(\ga+2l-1)}\\
&=
2\cos{  \tfrac{\pi }{4}  {\scriptstyle (\ga+2l-1)}  }+
2\cos{ \tfrac{\pi }{4}  {\scriptstyle (\de+2m-1)}  }
\end{align*}
and 
\begin{align*}
-s_2^\R
&=-\om^{-1}s_2 =e^{  {\scriptscriptstyle -\frac{2\pi \i}{\four}}} s_2\\
&=2+2\cos{  \tfrac{\pi }{4}  {\scriptstyle (\ga+\de+2l+2m-2)}  }+
2\cos{  \tfrac{\pi }{4} {\scriptstyle (\ga-\de+2l-2m)}  }\\
&=
2+4\cos{ {\tfrac{\pi }{4}}  {\scriptstyle(\ga+2l-1)}  }
\,\cos{ {\tfrac{\pi }{4}}  {\scriptstyle(\de+2m-1)}  }.
\end{align*}
This time $l=0,2$ and $m=1,3$ are eliminated as they would render the map
$(\ga,\de)\mapsto(s_1^\R,s_2^\R)$
non-injective.  We conclude that $(l,m)=(1,2)$ or $(3,0)$.  Again, we obtain the stated values for $s_1^\R,s_2^\R$. 

\subsection{Cases 5a, 5b:}  We shall just indicate the differences from the above argument.  The formal solution 
has 
$\La_{-1} = P^{-1} (-x^2W^t)P = -x^2d_5$,  with 
$P = e^w \Om^{-1}$.   Here 
$e^w=\diag(e^{w_0},e^{w_1},e^{w_2},e^{w_3},e^{w_4}),
\Om=(\om^{ij})_{0\le i,j\le 4}$, with $\om = e^{{2\pi \i}/5}$.  We shall
also use $\Pi=\Om d_5 \Om^{-1}$.

Lemma \ref{formalsymmetries}
holds (with $d_5$ instead of $d_4$), but this time we have
\[
 \Om \,\bar\Om^{-1} =
\bp
 J_1 & \\
  & J_4
\ep,
 \ \ 
 \Om\De\Om=
 \begin{cases}
 5d_5^4 \ \text{in case 5a}\\
 5d_5^2 \ \text{in case 5b}
 \end{cases}
\]
The matrix $(  \ar(\om^i-\om^j)  )_{0\le i,j\le 4}$ is
\[
\bp
\vphantom{\frac{5^5_5}{4^4_4}}
0 &\tfrac{-3\pi}{10} & \tfrac{-\pi}{10} & \tfrac{\pi}{10}& \tfrac{3\pi}{10}\\
\vphantom{\frac{5^5_5}{4^4_4}}
\tfrac{7\pi}{10} & 0 & \tfrac{\pi}{10} & \tfrac{3\pi}{10} & \tfrac{5\pi}{10}\\
\vphantom{\frac{5^5_5}{4^4_4}}
\tfrac{9\pi}{10} & \tfrac{11\pi}{10} & 0 & \tfrac{5\pi}{10} & \tfrac{7\pi}{10}\\
\vphantom{\frac{5^5_5}{4^4_4}}
\tfrac{11\pi}{10} & \tfrac{13\pi}{10} & \tfrac{15\pi}{10} & 0 & \tfrac{9\pi}{10}\\
\vphantom{\frac{5^5_5}{4^4_4}}
\tfrac{13\pi}{10} & \tfrac{15\pi}{10} & \tfrac{17\pi}{10} & \tfrac{19\pi}{10} & 0
\ep.
\]
The Stokes rays are given by $\ar\ze \in \tfrac\pi5\Z$. 
We choose
\[
\Om_1=\{ \ze\in\C \st -\tfrac{3\pi}{5}<\ar\ze < \tfrac{3\pi}{5}\}
\]
as initial Stokes sector, then
$\Om_{k+1}=e^{k\pi \i} \Om_1$, $k\in\tfrac15\Z$.

Versions of Lemmas \ref{psik} and \ref{qk} hold here with the following modifications:

$d_5^{-1}\Psi_{k+\frac25}(\om\ze)\Pi=\Psi_k(\ze)$ (cyclic) and
$\overline{  \Psi_{2-k}(\bar\ze)  } \Om\bar\Om^{-1} = \Psi_k(\ze)$ (reality) in
Lemma \ref{psik};  

$Q_{k+\frac25}=\Pi Q_k \Pi^{-1}$ (cyclic) and 
$Q_k=    
\Om \,\bar\Om^{-1}\,
\bar Q_{ {\scriptstyle\frac95} - k}^{-1}\,
(\Om \,\bar\Om^{-1})^{-1}$ (reality)  in
Lemma \ref{qk}.

The analogue of Lemma \ref{triangular} is that, for $i\ne j$, 
the $(i,j)$ entry of $Q_k$ is $0$ if $\ar(\om^i-\om^j) \not\equiv 
\frac{(19-10k)\pi}{10}$ mod $2\pi$.    Thus the fundamental matrices 
$Q_{1}, Q_{1\scriptstyle\frac15}$ have the form
$Q_{1}=I+q_{20}E_{20}+q_{34}E_{34}$,
$Q_{1\scriptstyle\frac15}=I+q_{10}E_{10}+q_{24}E_{24}$
where $E_{ij}$ denotes the matrix whose only nonzero entry is $1$ in
the $(i,j)$ position ($0\le i,j\le 4$).
The reality condition gives $q_{20}+\bar q_{24}=0$, $q_{34}+\bar q_{10}=0$. Let
us write
\[
s_1=q_{10},\ \ s_2=q_{20}.
\]
Then the anti-symmetry conditions imply that
\[
s_1=
\begin{cases}
\om^{2} s_1^{\R}\ \text{in case 5a}\\
\om\, s_1^{\R}\ \text{in case 5b}
\end{cases}
\ \ 
s_2=
\begin{cases}
\om^{4} s_2^{\R}\ \text{in case 5a}\\
\om^2 s_2^{\R}\ \text{in case 5b}
\end{cases}
\]
where $s_1^{\R},s_2^{\R}$ are real.  

Finally, in analogy to Corollary \ref{charac}, 
the monodromy at $\infty$ is  
$(Q_1 Q_{1\scriptstyle\frac15}\Pi)^5$, and the
characteristic polynomial of $Q_1 Q_{1\scriptstyle\frac15}\Pi$
is
\[
\la^5  - s_1\la^4 - s_2 \la^3 + \bar s_2 \la^2 + \bar s_1 \la - 1.  
\]
If $\la$ is a root of this polynomial,
then so is $\frac{1}{\om\la}$ in case 5a, and $\frac{1}{\om^3\la}$ in case 5b.

\no{\bf Proof of Theorem B(ii):}  
To compute $s_1^\R, s_2^\R$ in cases 5a, 5b we use the fact that
the eigenvalues of
$(Q_1 Q_{1\scriptstyle\frac15}\Pi)^5$
are
$e^{\scriptstyle \pi\i \ga_0}$,
$e^{\scriptstyle \pi\i \ga_1}$,
$e^{\scriptstyle \pi\i \ga_2}$,
$e^{\scriptstyle \pi\i \ga_3}$,
$e^{\scriptstyle \pi\i \ga_4}$.
Hence there exist integers $a,b,c,d,e$ such that
the eigenvalues of $Q_1 Q_{1\scriptstyle\frac15}\Pi$
are 
\[
\om^a e^{\scriptstyle \frac\pi5\i \ga_0},
\om^b e^{\scriptstyle \frac\pi5\i \ga_1},
\om^c e^{\scriptstyle \frac\pi5\i \ga_2},
\om^d e^{\scriptstyle \frac\pi5\i \ga_3},
\om^e e^{\scriptstyle \frac\pi5\i \ga_4}.
\]
To determine $a,b,c,d,e$ we consider each case separately.

\no{\em Case 5a:}      

From the symmetry $\la\mapsto \frac{1}{\om\la}$,  we have
$
(a,b,c,d,e)=(l,m,2,-m-1,-l-1)
$
for some $l,m\in \Z/5\Z$.   
We obtain
\begin{align*}
s_1^\R
&=\om^{-2}  s_1=
e^{  {\scriptscriptstyle -\frac{4\pi \i}{\five}}} s_1\\
&=
e^{  {\scriptscriptstyle \frac{\pi \i}{\five}}(\ga+2l-4)}+
e^{  {\scriptscriptstyle \frac{\pi \i}{\five}}(\de+2m-4)}+1+
e^{  {\scriptscriptstyle \frac{\pi \i}{\five}}(-\de-2m-6)}+
e^{  {\scriptscriptstyle \frac{\pi \i}{\five}}(-\ga-2l-6)}\\
&=
1+2\cos{  \tfrac{\pi }{5}  {\scriptstyle (\ga+2l-4)}  }+
2\cos{ \tfrac{\pi }{5}  {\scriptstyle (\de+2m-4)}  }
\end{align*}
and similarly   
\begin{align*}
-s_2^\R
&=-\om^{-4}s_2 =-e^{  {\scriptscriptstyle -\frac{8\pi \i}{\five}}} s_2\\
&=2+2\cos{  \tfrac{\pi }{5}  {\scriptstyle (\ga+2l-4)}  }+
2\cos{ \tfrac{\pi }{5}  {\scriptstyle (\de+2m-4)}  }\\
&\ \ \ \ \ \ \ \ \ \ \ \ \ \ \ \ \ \ \ \ 
+
2\cos{  \tfrac{\pi }{5}  {\scriptstyle (\ga+\de+2l+2m-8)}  }+
2\cos{  \tfrac{\pi }{5} {\scriptstyle (\ga-\de+2l-2m)}  }\\
&=2+2\cos{  \tfrac{\pi }{5}  {\scriptstyle (\ga+2l-4)}  }+
2\cos{ \tfrac{\pi }{5}  {\scriptstyle (\de+2m-4)}  }+
4\cos{  \tfrac{\pi }{5}  {\scriptstyle (\ga+2l-4)}  }
\cos{ \tfrac{\pi }{5}  {\scriptstyle (\de+2m-4)}  }.
\end{align*}
The analogue of Proposition \ref{injective} is that the map 
$(\ga,\de)\mapsto (s_1^\R,s_2^\R)$ is injective on the region 
$\ga\ge -1$, $\de\le 2$, $\ga-\de\le 2$.  
To investigate injectivity, it suffices to investigate
\[
\left(
\cos \tfrac{\pi }{5}  {\scriptstyle(\ga+2l-4)}  +
\cos \tfrac{\pi }{5}  {\scriptstyle(\de+2m-4)}  ,
\cos \tfrac{\pi }{5}  {\scriptstyle(\ga+2l-4)}
\cos \tfrac{\pi }{5}  {\scriptstyle(\de+2m-4)} 
\right).
\]
Now, the map $\ga\mapsto \cos \tfrac{\pi }{5}  {\scriptstyle(\ga+2l-4)} $
is injective on $[-1,4]$ only for $l\equiv 0 \mod 5$. The map $\de\mapsto \cos \tfrac{\pi }{5}  {\scriptstyle(\ga+2m-4)} $
is injective on $[-3,2]$ only for $m\equiv 1 \mod 5$.  We conclude that
$
(l,m)=(0,1).
$
This gives the stated values for $s_1^\R,s_2^\R$.

\no{\em Case 5b:}      

Using the symmetry $\la\mapsto \frac{1}{\om^3\la}$,  we have
$
(a,b,c,d,e)=(m,1,-m-3,-l-3,l).
$
Hence
\begin{align*}
s_1^\R
&=\om^{-1}  s_1=
e^{  {\scriptscriptstyle -\frac{2\pi \i}{\five}}} s_1\\
&=
1+2\cos{  \tfrac{\pi }{5}  {\scriptstyle (\ga+2l-2)}  }+
2\cos{ \tfrac{\pi }{5}  {\scriptstyle (\de+2m-2)}  }
\end{align*}
and
\begin{align*}
-s_2^\R
&=-\om^{-2}s_2 =-e^{  {\scriptscriptstyle -\frac{4\pi \i}{\five}}} s_2\\
&=2+2\cos{  \tfrac{\pi }{5}  {\scriptstyle (\ga+2l-2)}  }+
2\cos{ \tfrac{\pi }{5}  {\scriptstyle (\de+2m-2)}  }+
4\cos{  \tfrac{\pi }{5}  {\scriptstyle (\ga+2l-2)}  }
\cos{ \tfrac{\pi }{5}  {\scriptstyle (\de+2m-2)}  }.
\end{align*}
The injectivity criterion forces $l=4$, $m=0$.  This gives the same values for $s_1^\R,s_2^\R$ as in case 5a.

\subsection{Cases 5c, 5d, 5e:}  

The matrices $Q_k$ and their cyclic/reality/anti-symmetry properties
are the same as in cases 5a, 5b, except that in the anti-symmetry formula (Lemma \ref{qk}) we have
\[
\Om\De\Om=
 \begin{cases}
 5d_5^3 \ \text{in case 5c}\\
 5I\  \  \text{in case 5d}\\
 5d_5 \ \text{in case 5e}
 \end{cases}
\]
This leads to
\[
s_1=
\begin{cases}
\om^{4} s_1^{\R}\ \text{in case 5c}\\
\ \ \  s_1^{\R}\ \text{in case 5d}\\
\om^{3} s_1^{\R}\ \text{in case 5e}
\end{cases}
\ \ 
s_2=
\begin{cases}
\om^{3} s_2^{\R}\ \text{in case 5c}\\
\ \  \ s_2^{\R}\ \text{in case 5d}\\
\om\, s_2^{\R}\ \,\text{in case 5e}
\end{cases}
\]
with $s_1^{\R},s_2^{\R}$  real.   
If $\la$ is a root of the characteristic polynomial of $Q_1 Q_{1\scriptstyle\frac15}\Pi$,
then so is $\frac{1}{\om^2\la}$ in case 5c,  $\frac{1}{\la}$ in case 5d, and
$\frac{1}{\om^4\la}$ in case 5e.

\no{\bf Proof of Theorem B(iii):}   
To compute $s_1^\R, s_2^\R$ in cases 5c, 5d, 5e we just have to identify $(a,b,c,d,e)$. 

\no{\em Case 5c:}      

Here 
$
(a,b,c,d,e)=(l,m,-m-2,-l-2,4)
$
for some $l,m\in \Z/5\Z$.   
This gives
\begin{align*}
s_1^\R
&=\om^{-4}  s_1=
e^{  {\scriptscriptstyle -\frac{8\pi \i}{\five}}} s_1\\
&=
1+2\cos{  \tfrac{\pi }{5}  {\scriptstyle (\ga+2l-8)}  }+
2\cos{ \tfrac{\pi }{5}  {\scriptstyle (\de+2m-8)}  }
\end{align*}
and 
\begin{align*}
-s_2^\R
&=-\om^{-3}s_2 =-e^{  {\scriptscriptstyle -\frac{6\pi \i}{\five}}} s_2\\
&=2+2\cos{  \tfrac{\pi }{5}  {\scriptstyle (\ga+2l-8)}  }+
2\cos{ \tfrac{\pi }{5}  {\scriptstyle (\de+2m-8)}  }+
4\cos{  \tfrac{\pi }{5}  {\scriptstyle (\ga+2l-8)}  }
\cos{ \tfrac{\pi }{5}  {\scriptstyle (\de+2m-8)}  }.
\end{align*}
The injectivity criterion forces $l=0$, $m=1$.  This gives the stated values for $s_1^\R,s_2^\R$.

\no{\em Case 5d:}    

Here 
$
(a,b,c,d,e)=(0,l,m,-m,-l)
$
for some $l,m\in \Z/5\Z$.   
This gives
\[
s_1^\R
= s_1
=
1+2\cos{  \tfrac{\pi }{5}  {\scriptstyle (\ga+2l)}  }+
2\cos{ \tfrac{\pi }{5}  {\scriptstyle (\de+2m)}  }
\]
and 
\[
-s_2^\R
=-s_2 
=2+2\cos{  \tfrac{\pi }{5}  {\scriptstyle (\ga+2l)}  }+
2\cos{ \tfrac{\pi }{5}  {\scriptstyle (\de+2m)}  }+
4\cos{  \tfrac{\pi }{5}  {\scriptstyle (\ga+2l)}  }
\cos{ \tfrac{\pi }{5}  {\scriptstyle (\de+2m)}  }.
\]
The injectivity criterion forces $l=1$, $m=2$.  This gives the same values for $s_1^\R,s_2^\R$
as in case 5c.

\no{\em Case 5e:}      

Here 
$
(a,b,c,d,e)=(m,-m-4,-l-4,3,l)
$
for some $l,m\in \Z/5\Z$.   
This gives
\begin{align*}
s_1^\R
&=\om^{-3}  s_1=
e^{  {\scriptscriptstyle -\frac{6\pi \i}{\five}}} s_1\\
&=
1+2\cos{  \tfrac{\pi }{5}  {\scriptstyle (\ga+2l-6)}  }+
2\cos{ \tfrac{\pi }{5}  {\scriptstyle (\de+2m-6)}  }
\end{align*}
and 
\begin{align*}
-s_2^\R
&=-\om^{-1}s_2 =-e^{  {\scriptscriptstyle -\frac{2\pi \i}{\five}}} s_2\\
&=2+2\cos{  \tfrac{\pi }{5}  {\scriptstyle (\ga+2l-6)}  }+
2\cos{ \tfrac{\pi }{5}  {\scriptstyle (\de+2m-6)}  }+
4\cos{  \tfrac{\pi }{5}  {\scriptstyle (\ga+2l-6)}  }
\cos{ \tfrac{\pi }{5}  {\scriptstyle (\de+2m-6)}  }.
\end{align*}
This time the injectivity criterion forces $l=4$, $m=0$, but again we obtain the same values for $s_1^\R,s_2^\R$ as in case 5c.

\subsection{Cases 6a, 6b, 6c:}

This is similar to cases 4a, 4b.   The formal solution 
has 
$\La_{-1} = P^{-1} (-x^2W^t)P = -x^2d_6$,  with 
$P = e^w \Om^{-1}$.  This time we have
$e^w=\diag(e^{w_0},e^{w_1},e^{w_2},e^{w_3},e^{w_4},e^{w_5}),
\Om=(\om^{ij})_{0\le i,j\le 5}$, $\om = e^{{2\pi \i}/6}$,  $\Pi=\Om d_6 \Om^{-1}$.

In Lemma \ref{formalsymmetries} we have
\[
 \Om \,\bar\Om^{-1} =
 \bp J_1 & \\
  & J_5
\ep,
 \ \ 
 \Om\De\Om=
 \begin{cases}
 6d_6^4 \ \text{in case 6a}\\
 6I \ \ \text{in case 6b}\\
 6d_6^2 \ \text{in case 6c}
 \end{cases}
\]
The matrix $(  \ar(\om^i-\om^j)  )_{0\le i,j\le 4}$ is
\[
\bp
\vphantom{\frac{5^5_5}{4^4_4}}  
0 &\tfrac{-2\pi}{6} & \tfrac{-\pi}{6} & 0 & \tfrac{\pi}{6} & \tfrac{2\pi}{6}\\
\vphantom{\frac{5^5_5}{4^4_4}}
\tfrac{4\pi}{6} & 0 & 0 & \tfrac{\pi}{6} & \tfrac{2\pi}{6} & \tfrac{3\pi}{6}\\
\vphantom{\frac{5^5_5}{4^4_4}}
\tfrac{5\pi}{6} & \tfrac{6\pi}{6} & 0 & \tfrac{2\pi}{6} & \tfrac{3\pi}{6} & \tfrac{4\pi}{6}\\
\vphantom{\frac{5^5_5}{4^4_4}}
\tfrac{6\pi}{6} & \tfrac{7\pi}{6} & \tfrac{8\pi}{6} & 0 & \tfrac{4\pi}{6} & \tfrac{5\pi}{6}\\
\vphantom{\frac{5^5_5}{4^4_4}}
\tfrac{7\pi}{6} & \tfrac{8\pi}{6} & \tfrac{9\pi}{6} & \tfrac{10\pi}{6} & 0 & \tfrac{6\pi}{6}\\
\tfrac{8\pi}{6} & \tfrac{9\pi}{6} & \tfrac{10\pi}{6} & \tfrac{11\pi}{6} &
\tfrac{12\pi}{6} & 0
\ep.
\]
The Stokes rays are given by $\ar\ze \in \tfrac\pi6\Z$. 
We choose
\[
\Om_1=\{ \ze\in\C \st -\tfrac{\pi}{2}<\ar\ze < \tfrac{4\pi}{6}\}
\]
as initial Stokes sector, then
$
\Om_{k+1}=e^{k\pi \i} \Om_1, \ \ k\in\tfrac16\Z.
$

Versions of Lemmas \ref{psik} and \ref{qk} hold here with the following modifications:

$d_6^{-1}\Psi_{k+\frac26}(\om\ze)\Pi=\Psi_k(\ze)$ (cyclic) and
$\overline{  \Psi_{\frac{11}6-k}(\bar\ze)  } \Om\bar\Om^{-1} = \Psi_k(\ze)$ (reality) in
Lemma \ref{psik};  

$Q_{k+\frac26}=\Pi Q_k \Pi^{-1}$ (cyclic) and 
$Q_k=    
\Om \,\bar\Om^{-1}\,
\bar Q_{ {\scriptstyle\frac{10}6} - k}^{-1}\,
(\Om \,\bar\Om^{-1})^{-1}$ (reality)  in
Lemma \ref{qk}.

The analogue of Lemma \ref{triangular} is that, for $i\ne j$, 
the $(i,j)$ entry of $Q_k$ is $0$ if $\ar(\om^i-\om^j) \not\equiv 
\frac{(23-19k)\pi}{6}$ mod $2\pi$.    Thus the fundamental matrices 
$Q_{1}, Q_{1\scriptstyle\frac16}$ have the form
$Q_{1}=I+q_{20}E_{20}+q_{35}E_{35}$, 
$Q_{1\scriptstyle\frac16}=I+q_{10}E_{10}+q_{25}E_{25}+q_{34}E_{34}$.
The reality conditions are  $q_{20}+\bar q_{35}=0$, $q_{10}+\bar q_{34}=0$, $q_{25}+\bar q_{25}=0$. Let
us write
\[
s_1=q_{10},\ \ s_2=q_{20},\ \ s_3=q_{25}.
\]
Then the anti-symmetry conditions imply that $s_3=0$ and
\[
s_1=
\begin{cases}
\om^{2} s_1^{\R}\ \text{in case 6a}\\
\ \ \ s_1^{\R}\ \text{in case 6b}\\
\om\ s_1^{\R}\ \text{in case 6c}
\end{cases}
\ \ 
s_2=
\begin{cases}
\om^{4} s_2^{\R}\ \text{in case 6a}\\
\ \ \ s_2^{\R}\ \text{in case 6b}\\
\om^2 s_2^{\R}\ \text{in case 6c}
\end{cases}
\]
where $s_1^{\R},s_2^{\R}$ are real.  

The monodromy at $\infty$ is  
$(Q_1 Q_{1\scriptstyle\frac16}\Pi)^6$, and the
characteristic polynomial of $Q_1 Q_{1\scriptstyle\frac16}\Pi$
is
\[
\la^6  - s_1\la^5 - s_2 \la^4 + \bar s_2 \la^2 + \bar s_1 \la - 1. 
\] 
If $\la$ is a root of this polynomial,
then so is $\frac{1}{\om^2\la}$ in case 6a,  $\frac{1}{\la}$ in case 6b, and
$\frac{1}{\om^4\la}$ in case 6c.

\no{\bf Proof of Theorem B(iv):}   
As the eigenvalues of
$(Q_1 Q_{1\scriptstyle\frac16}\Pi)^6$
are
$e^{\scriptstyle \pi\i \ga_0}$, 
$e^{\scriptstyle \pi\i \ga_1}$, 
$e^{\scriptstyle \pi\i \ga_2}$, 
$e^{\scriptstyle \pi\i \ga_3}$, 
$e^{\scriptstyle \pi\i \ga_4}$, 
$e^{\scriptstyle \pi\i \ga_5}$, 
there exist integers $a,b,c,d,e,f$ such that
the eigenvalues of $Q_1 Q_{1\scriptstyle\frac16}\Pi$
are 
\[
\om^a e^{\scriptstyle \frac\pi6\i \ga_0},
\om^b e^{\scriptstyle \frac\pi6\i \ga_1},
\om^c e^{\scriptstyle \frac\pi6\i \ga_2},
\om^d e^{\scriptstyle \frac\pi6\i \ga_3},
\om^e e^{\scriptstyle \frac\pi6\i \ga_4},
\om^f e^{\scriptstyle \frac\pi6\i \ga_5}.
\]
To determine $a,b,c,d,e,f$ we consider each case separately.

\no{\em Case 6a:}      

From the symmetry $\la\mapsto \frac{1}{\om^2\la}$,  we have
$
(a,b,c,d,e,f)=(l,m,2\text{ or }5,-m-2,-l-2,5\text{ or }2)
$
for some $l,m\in \Z/6\Z$.   
We obtain
\begin{align*}
s_1^\R
&=\om^{-2}  s_1=
e^{  {\scriptscriptstyle -\frac{4\pi \i}{\six}}} s_1\\
&=
e^{  {\scriptscriptstyle \frac{\pi \i}{\six}}(\ga+2l-4)}+
e^{  {\scriptscriptstyle \frac{-\pi \i}{\six}}(\ga+2l-4)}+
e^{  {\scriptscriptstyle \frac{\pi \i}{\six}}(\de+2m-4)}+
e^{  {\scriptscriptstyle \frac{-\pi \i}{\six}}(\de+2m-4)}\\
&=
2\cos{  \tfrac{\pi }{6}  {\scriptstyle (\ga+2l-4)}  }+
2\cos{ \tfrac{\pi }{6}  {\scriptstyle (\de+2m-4)}  }
\end{align*}
and 
\begin{align*}
-s_2^\R
&=-\om^{-4}s_2 =-e^{  {\scriptscriptstyle -\frac{8\pi \i}{\six}}} s_2\\
&=1+
4\cos{  \tfrac{\pi }{6}  {\scriptstyle (\ga+2l-4)}  }
\cos{ \tfrac{\pi }{6}  {\scriptstyle (\de+2m-4)}  }.
\end{align*}
The analogue of Proposition \ref{injective} is that the map 
$(\ga,\de)\mapsto (s_1^\R,s_2^\R)$ is injective on the region 
$\ga\ge -2$, $\de\le 2$, $\ga-\de\le 2$.    

The map $\ga\mapsto \cos \tfrac{\pi }{6}  {\scriptstyle(\ga+2l-4)} $
is injective on $[-2,4]$ only for $l\equiv 0,3 \mod 6$. The map $\de\mapsto \cos \tfrac{\pi }{6}  {\scriptstyle(\ga+2m-4)} $
is injective on $[-4,2]$ only for $m\equiv 1,4 \mod 6$.  We conclude that $(l,m)=(0,1)$ or $(3,4)$. This gives the stated values for $s_1^\R,s_2^\R$.

\no{\em Case 6b:}           

Here 
$
(a,b,c,d,e,f)=(0\text{ or }3,l,m,3\text{ or }0,-m,-l)
$
for some $l,m\in \Z/6\Z$.   This gives
\begin{align*}
s_1^\R
&=  s_1\\
&=
e^{  {\scriptscriptstyle \frac{\pi \i}{\six}}(\ga+2l)}+
e^{  {\scriptscriptstyle \frac{-\pi \i}{\six}}(\ga+2l)}+
e^{  {\scriptscriptstyle \frac{\pi \i}{\six}}(\de+2m)}+
e^{  {\scriptscriptstyle \frac{-\pi \i}{\six}}(\de+2m)}\\
&=
2\cos{  \tfrac{\pi }{6}  {\scriptstyle (\ga+2l)}  }+
2\cos{ \tfrac{\pi }{6}  {\scriptstyle (\de+2m)}  }
\end{align*}
and 
\begin{align*}
-s_2^\R
&=-s_2 \\
&=1+
4\cos{  \tfrac{\pi }{6}  {\scriptstyle (\ga+2l)}  }
\cos{ \tfrac{\pi }{6}  {\scriptstyle (\de+2m)}  }.
\end{align*}
The map $\ga\mapsto \cos \tfrac{\pi }{6}  {\scriptstyle(\ga+2l)} $
is injective on $[-2,4]$ only for $l\equiv 1,4 \mod 6$. The map $\de\mapsto \cos \tfrac{\pi }{6}  {\scriptstyle(\ga+2m)} $
is injective on $[-4,2]$ only for $m\equiv 2,5 \mod 6$.  Hence $(l,m)=(1,2)$ or $(4,5)$. 
This leads to the same values for $s_1^\R,s_2^\R$ as in case 6a.

\no{\em Case 6c:}       

Here 
$
(a,b,c,d,e,f)=(l,1\text{ or }4,-l-4,-m-4,4\text{ or }1,m)
$
for some $l,m\in \Z/6\Z$.   
We obtain
\begin{align*}
s_1^\R
&=\om^{-1}  s_1=
e^{  {\scriptscriptstyle -\frac{2\pi \i}{\six}}} s_1\\
&=
e^{  {\scriptscriptstyle \frac{\pi \i}{\six}}(\ga+2l-2)}+
e^{  {\scriptscriptstyle \frac{-\pi \i}{\six}}(\ga+2l-2)}+
e^{  {\scriptscriptstyle \frac{\pi \i}{\six}}(\de+2m-2)}+
e^{  {\scriptscriptstyle \frac{-\pi \i}{\six}}(\de+2m-2)}\\
&=
2\cos{  \tfrac{\pi }{6}  {\scriptstyle (\ga+2l-2)}  }+
2\cos{ \tfrac{\pi }{6}  {\scriptstyle (\de+2m-2)}  }
\end{align*}
and 
\begin{align*}
-s_2^\R
&=-\om^{-2}s_2 =-e^{  {\scriptscriptstyle -\frac{4\pi \i}{\six}}} s_2\\
&=1+
4\cos{  \tfrac{\pi }{6}  {\scriptstyle (\ga+2l-2)}  }
\cos{ \tfrac{\pi }{6}  {\scriptstyle (\de+2m-2)}  }.
\end{align*}
The map $\ga\mapsto \cos \tfrac{\pi }{6}  {\scriptstyle(\ga+2l-2)} $
is injective on $[-2,4]$ only for $l\equiv 2,5 \mod 6$. The map $\de\mapsto \cos \tfrac{\pi }{6}  {\scriptstyle(\ga+2m-2)} $
is injective on $[-4,2]$ only for $m\equiv 0,3 \mod 6$.  We conclude that $(l,m)=(2,3)$ or $(5,0)$. 
Again this gives the same values for $s_1^\R,s_2^\R$ as in case 6a.

{\em

\noindent
Department of Mathematics\newline
Faculty of Science and Engineering\newline
Waseda University\newline
3-4-1 Okubo, Shinjuku, Tokyo 169-8555\newline
JAPAN

\noindent
Department of Mathematical Sciences\newline
Indiana University-Purdue University, Indianapolis\newline
402 N. Blackford St.\newline
Indianapolis, IN 46202-3267\newline
USA
   
\noindent
Taida Institute for Mathematical Sciences\newline
Center for Advanced Study in Theoretical Sciences  \newline
National Taiwan University \newline
Taipei 10617\newline
TAIWAN

}

\end{document}